\documentclass[reqno]{amsart}
\usepackage[dvips]{color}

	\parskip 1ex
\usepackage{amsmath,amssymb}
\usepackage{verbatim}
\theoremstyle{plain}
\newtheorem{lemma}{Lemma}[section]
\newtheorem{theorem}[lemma]{Theorem}

\newtheorem{proposition}[lemma]{Proposition}

\allowdisplaybreaks

\theoremstyle{remark}
\newtheorem{remark}{Remark}

\setlength{\textwidth}{6.38in}
\addtolength{\oddsidemargin}{-.68in}
\addtolength{\evensidemargin}{-.68in}


\makeatletter
\newcommand*{\rom}[1]{\expandafter\@slowromancap\romannumeral #1@}
\makeatother

\numberwithin{equation}{section}
\begin{document}

\vskip 0.125in

\title[Ill-posedness and Blowup of the Inviscid Primitive Equations]
{Finite-time Blowup and Ill-posedness in Sobolev Spaces of the Inviscid Primitive Equations with Rotation}

\date{September 16, 2020}

\author[S. Ibrahim]{Slim Ibrahim}
\address[S. Ibrahim]
{Department of Mathematics and Statistics  \\
University of Victoria  \\
3800 Finnerty Road, Victoria  \\
 B.C., Canada V8P 5C2.} \email{ibrahims@uvic.ca}

\author[Q. Lin]{Quyuan Lin}
\address[Q. Lin]
{Department of Mathematics  \\
	Texas A\&M University  \\
	College Station  \\
	Texas, TX 77840, USA.} \email{abellyn@hotmail.com}

\author[E.S. Titi]{Edriss S. Titi}
\address[E.S. Titi]
{Department of Applied Mathematics and Theoretical Physics\\ University of Cambridge\\
Wilberforce Road, Cambridge CB3 0WA, UK.
Department of Mathematics  \\
	Texas A\&M University  \\
	College Station  \\
	Texas, TX 77840, USA.
 Department of Computer Science and Applied Mathematics \\
Weizmann Institute of Science  \\
Rehovot 76100, Israel.} \email{Edriss.Titi@damtp.cam.ac.uk}
\email{titi@math.tamu.edu}

\begin{abstract}
Large scale dynamics of the oceans and the atmosphere are governed by the primitive equations (PEs). It is well-known that the three-dimensional viscous PEs is globally well-posed in Sobolev spaces. On the other hand, the inviscid PEs without rotation is known to be ill-posed in Sobolev spaces, and its smooth solutions can form singularity in finite time. In this paper, we extend the above results in the presence of rotation. First, we construct finite-time blowup solutions to the inviscid PEs with rotation, and establish that the inviscid PEs with rotation is ill-posed in Sobolev spaces in the sense that its perturbation around a certain steady state background flow is both linearly and nonlinearly ill-posed in Sobolev spaces. Its linear instability is of the Kelvin-Helmholtz type similar to the one appears in the context of vortex sheets problem. This implies that the inviscid PEs is also linearly ill-posed in Gevrey class of order $s > 1$, and suggests that a suitable space for the well-posedness is Gevrey class of order $s = 1$, which is exactly the space of analytic functions.
\end{abstract}

\maketitle

MSC Subject Classifications: 35Q35, 35B44, 35Q86, 86A10, 76E07.\\

Keywords: primitive equations; rotation; blow-up; ill-posedness

\section{introduction}
We consider the following $3D$ primitive equations (PEs):
\begin{eqnarray}
	&&\hskip-.8in u_t + u u_x + vu_x + w u_z -\nu_h \Delta u - \nu_z u_{zz} -\Omega \, v+
	p_x = 0 , \label{EQ1-1}  \\
	&&\hskip-.8in v_t + u v_x + vv_y + w
	v_z -\nu_h \Delta v - \nu_z v_{zz} +\Omega\, u + p_y = 0 , \label{EQ1-2}  \\
	&&\hskip-.8in
	p_z + T =0 ,   \label{EQ1-3}  \\
	&&\hskip-.8in
	T_t + u T_x + v T_y + w T_z -\kappa_h \Delta T - \kappa_z \partial_{zz} T = 0, \label{EQ1-4} \\
	&&\hskip-.8in
	u_x + v_y+ w_z =0   \label{EQ1-5}
\end{eqnarray}
in the horizontal channel $\big\{(x,y,z): 0\leq z\leq 1, (x,y)\in \mathbb{R}^2\big\}$, which are supplemented with the initial value $(u_0, v_0, T_0)$, and satisfy the relevant geophysical boundary conditions (cf. \cite{CT07,LTW92a,LTW92b,LTW95}). Here the horizontal velocity $(u,v)$, the vertical velocity $w$, the temperature $T$, and the pressure $p$ are the unknown functions of the variable $(t,x,y,z)$. The parameters $\nu_h \geq 0$ and $\nu_z \geq 0$ denote to the horizontal and vertical viscosities, $\kappa_h \geq 0$ and $\kappa_z \geq 0$ denote to the horizontal and vertical diffusivities, respectively. The parameter $\Omega\in\mathbb{R}$ denotes to the Coriolis parameter, which indicates the rate and direction of rotation. We denote by $\Delta = \partial_{xx} + \partial_{yy}$ the $2D$ horizontal Laplacian. When $\nu_h, \nu_z, \kappa_h, \kappa_z >0$, system (\ref{EQ1-1})--(\ref{EQ1-5}) is derived as a formal asymptotic limit of the small aspect ratio (the ratio of the depth or the height to the horizontal length scale) from the Rayleigh-B\'enard (Boussinesq) system. The derivation was rigorously justified first by Az\'erad and Guill\'en \cite{AG01} in a weak sense, then by Li and Titi \cite{LT18} in a strong sense with error estimates in terms of the small aspect ratio. The global existence of strong solutions for the $3D$ PEs with full viscosity and full diffusion was first established by Cao and Titi in \cite{CT07}, and later by Kobelkov in \cite{K06}, see also the subsequent articles of Kukavica and Ziane \cite{KZ07,KZ072} for different boundary conditions,  as well as Hieber and Kashiwabara \cite{Hieber-Kashiwabara} for some progress towards relaxing the smoothness on the initial data by using the semigroup method. With only horizontal viscosity, i.e., $\nu_h >0$ and $\nu_z =0$, the global well-posedness of $3D$ PEs was established by Cao, Li and Titi in \cite{CLT16,CLT17,CLT17b}. On the other hand, with only vertical viscosity, i.e., $\nu_h =0$ and $\nu_z>0$, Cao, Lin and Titi established recently \cite{CLT19} the local well-posedness of the PEs in Sobolev spaces by considering an additional weak dissipation, which is the linear (Rayleigh-like friction) damping. Following \cite{CLT19}, it can be shown that with this linear damping, the inviscid PEs ($\nu_h =0$ and $\nu_z=0$) is also locally well-posed in Sobolev spaces, with higher Sobolev regularity requirement on the initial data. This linear damping helps the system overcome the ill-posedness in Sobolev spaces (see discussion below) established by Renardy in \cite{RE09} for the case without rotation, and current paper for the case with rotation. See also \cite{CT10} for a similar idea on the effect of this linear damping.

When $\nu_h = \nu_z =0$, the inviscid PEs without coupling with the temperature is also called the hydrostatic Euler equations. In the absence of rotation ($\Omega = 0$), the linear ill-posedness of the inviscid PEs, near certain background shear flows, has been established by Renardy in \cite{RE09}. Later on, the nonlinear ill-posedness of the inviscid PEs without rotation was established by Han-Kwan and Nguyen in \cite{HN16}, where they built an abstract framework to show the inviscid PEs are ill-posed in any Sobolev space. Moreover, it was proven that smooth solutions to the inviscid PEs, in the absence of rotation, can develop singularities in finite time. (cf. Cao, Ibrahim, Nakanishi and Titi \cite{CINT15}, and Wong \cite{W12}.) However, the results mentioned above do not include the case when the rotation rate $\Omega \neq 0$, which is the subject matter of this paper.

The linear ill-posedness results show that the linearized $2D$ inviscid PEs (which implies the same results for $3D$ case, see details below), around a special steady state background flow, has unstable solutions of the form $u(t,x,z) = e^{2\pi ikx} e^{\sigma_k t} u_k(z)$, where $\Re \sigma_k = \lambda k$ for some $\lambda \in \mathbb{R}$. Such Kelvin-Helmholtz type instability forbids the construction of solutions in Sobolev spaces. Kelvin-Helmholtz type instability also appears in the context of vortex sheets, see, e.g., \cite{CO89} (see also the survey paper \cite{BT07} and reference therein). To obtain positive results, one must start from initial data $u_0$ that are strongly
localized in Fourier, typically for which $|\hat{u}_0(k,z)|\lesssim e^{-\delta |k|^{1/s}}$ with $\delta >0$ and $s\geq 1$. Such localization condition corresponds to Gevrey class of order $s$ in the $x$ variable. Kelvin-Helmholtz type instability also forbids the construction of solutions in Gevery class of order $s>1$. This suggests that the suitable space for the well-posedness of the inviscid PEs (with or without rotation) is Gevrey class of order $s=1$, which is the space of analytic functions. This is consistent with positive results in \cite{ILT20,KTVZ11}. Notably, for the Prandtl equations, which has some similarity in its structure with the PEs, is shown in \cite{GD10} that its linearization around a special background flow has unstable solutions of similar form, but with $\Re \sigma_k \sim \lambda \sqrt{k}$ for $k\gg 1$ arbitrarily large and some positive $\lambda \in \mathbb{R}_+$. This implies that the optimal order $s$ for Prandtl equation is $s=2$, which is consistent with the positive results in \cite{DG19,LMY20}. This shows that the linear instability of the inviscid PEs is ``worse" than that of the Prandtl equations.

Although it is ill-posed in Sobolev spaces, the well-posedness of the inviscid PEs can be obtained by assuming either real analyticity
or some special structures (local Rayleigh condition) on the initial data \cite{BR99,BR03,GR99,KMVW14,KTVZ11,MW12}. In particular, the authors in \cite{KTVZ11} establish the local well-posedness in time of the $3D$ inviscid PEs in the space of analytic functions, but the time of existence they obtained shrinks to zero as the rate of rotation $|\Omega|$ increases toward infinity. This is contrary to the cases of the $3D$ fast rotating Euler, Navier–Stokes and Boussinesq equations, where the limit of fast rotation leads to strong ``dispersion" or averaging mechanism that weakens the nonlinear effects allowing to establishing the global regularity result in the case of the Navier-Stokes equations, and prolonging the life-span of the solution in the case of Euler equations, by Babin, Mahalov and Nicolaenko \cite{BMN97, BMN99a, BMN99b, BMN00} (see also \cite{CDGG06,D05,EM96,IY,KLT14} and references therein). In addition, we refer to \cite{BIT11,GST15,KTZ18,LT04} for simple examples demonstrating the above mechanism. In \cite{ILT20}, we improve the results in \cite{KTVZ11} by establishing the local in time well-posedness in the space of analytic functions for a time interval that is independent of the rate of rotation. Furthermore, we also establish a lower bound on the life-span of the solution that grows to infinity with the rotation rate for ``well-prepared" initial data. In a sense the results reported in this paper furnish a solid justification and motivation for our study in \cite{ILT20}. Specifically, the purpose of this paper is to establish the finite-time blowup and the ill-posedness in Sobolev spaces of the $3D$ inviscid PEs with rotation, and to investigate the effect of rotation on the blowup time. Due to the linear ill-posedness in Sobolev spaces and Gevrey class of order $s>1$, it is natural to consider the question of well-posedness of the inviscid PEs (with or without rotation) in Gevrey class of order $s=1$, which is the space of analytic functions (cf. \cite{ILT20} and \cite{KTVZ11}). By virtue of the finite-time blowup results, one can conclude that there is no hope to show the global well-posedness of the $3D$ inviscid PEs, even with fast rotation. The optimal result one can expect is that fast rotation prolongs the life-span of the $3D$ inviscid PEs (cf. \cite{ILT20}).

For this endeavor, we first simplify system (\ref{EQ1-1})--(\ref{EQ1-5}). If initially $T_0 = 0$, then it is easy to see that any smooth solution $(u,v,w,T)$ to system (\ref{EQ1-1})--(\ref{EQ1-5}), with initial data $(u_0, v_0, T_0)$ and suitable boundary conditions, must satisfy $T(t,x,y,z) \equiv 0$. Moreover, if initially $u_0$ and $v_0$ are independent of the $y$ variable, then any smooth solution $(u,v,w,T)$ remains independent of the $y$ variable. Notably, this is not true, e.g., in the case of weak wild solutions of the Euler equations (cf. \cite{BLNNT13}). Therefore, under these assumptions on the initial data, namely,
\begin{equation} \label{2d-condition}
    T_0(x,y,z)=T_0(x,z) = 0, \;\; u_0(x,y,z)=u_0(x,z), \;\; v_0(x,y,z)=v_0(x,z),
\end{equation}
we obtain the reduced (adiabatic) inviscid PEs system:
\begin{eqnarray}
	&&\hskip-.8in u_t + u\, u_x + w u_z -\Omega \, v+
	p_x = 0 , \label{EQ2-1}  \\
	&&\hskip-.8in v_t + u\, v_x + w
	v_z +\Omega\, u = 0 , \label{EQ2-2}  \\
	&&\hskip-.8in
	p_z =0 ,   \label{EQ2-3}  \\
	&&\hskip-.8in
	u_x + w_z =0   \label{EQ2-4}
\end{eqnarray}
in the horizontal channel $\big\{(x,z): 0\leq z\leq 1, x\in \mathbb{R}\big\}$. We supplement the above system with the initial condition $(u_0, v_0)$, subject to no-normal flow boundary condition on the top and bottom, and periodic in the $x$ variable:
\begin{equation}\label{BC-1}
    \begin{split}
        &w(t,x,0) = w(t,x,1) = 0, \\
        &u, v, w, p \;\text{are periodic in } x \;\text{with period } 1.
    \end{split}
\end{equation}
Observe that system (\ref{EQ2-1})--(\ref{EQ2-4}) is the reduced $3D$ hydrostatic Euler equations with rotation. The finite-time blowup of solutions or the ill-posedness of system (\ref{EQ2-1})--(\ref{EQ2-4}) imply the blowup of solutions or the ill-posedness of solutions of the original $3D$ system (\ref{EQ1-1})--(\ref{EQ1-5}) in the inviscid case ($\nu_h=\nu_z = 0$), with initial data satisfying (\ref{2d-condition}). Therefore, we focus in this paper on system (\ref{EQ2-1})--(\ref{EQ2-4}).

The paper is organized as follows. In section 2, we adopt ideas from \cite{CINT15,W12} and consider the initial data depending on the rate of rotation to establish the finite-time blowup of solutions to system (\ref{EQ2-1})--(\ref{EQ2-4}). In section 3, we show the ill-posedness of the perturbation about a steady state background flow which depends on $\Omega$ and has infinite energy. The perturbations about this background steady state are assumed to satisfy the period boundary conditions (\ref{BC-1}). In particular, following \cite{HN16,RE09}, we establish that the perturbed system about this steady state background flow is both linearly and nonlinearly ill-posed in Sobolev spaces, and is linearly ill-posed in Gevrey class of order $s>1$. In section 4, we make some concluding remarks, and study the effect of rotation on the blowup time based on the results established in section 2. Finally, We propose some interesting problems for future study.

\section{blowup of solutions}
In this section, we adopt the ideas in \cite{CINT15,W12}. We assume that $u$ and $v$ are odd in the $x$ variable, and that $w$ and $p$ are even in the $x$ variable. Observe that such symmetric conditions are invariant under smooth dynamics of system (\ref{EQ2-1})--(\ref{EQ2-4}). We first introduce the following proposition from \cite{CINT15} and provide further analysis strengthening its conclusion. Observe that in \cite{CISY89} (see also \cite[section 4]{O09}, and references therein), a similar problem, arising in a different fluid dynamic context, has been investigated.
\begin{proposition}\label{theorem-phi}(see \cite{CINT15})
Consider the following nonlinear nonlocal degenerate elliptic boundary value problem:
\begin{eqnarray}
\phi' - (\phi')^2 + \phi \phi^{''} + 2 \int_0^1 (\phi'(z))^2 dz = 0 ,\;\; \phi(0)=\phi(1)=0. \label{phi}
\end{eqnarray}
Then for each $\alpha\in(0,1)$, the boundary value problem (\ref{phi}) has a nontrivial solution $\phi_\alpha \in C^{2,\alpha}([0,1])$.
\end{proposition}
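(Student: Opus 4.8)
The plan is to treat the nonlocal term as a free constant and reduce the boundary value problem to a planar ODE that can be integrated explicitly. Writing $c := 2\int_0^1 (\phi'(z))^2\,dz$, the equation becomes the local relation $\phi\phi'' = (\phi')^2 - \phi' - c$ with $\phi(0)=\phi(1)=0$. The crucial preliminary observation is that the nonlocal constraint is \emph{automatic}: integrating this frozen equation over $[0,1]$ and using $\int_0^1\phi'\,dz = 0$ together with $\int_0^1 \phi\phi''\,dz = [\phi\phi']_0^1-\int_0^1(\phi')^2\,dz = -\int_0^1(\phi')^2\,dz$ (the boundary term vanishes by the Dirichlet conditions) yields $c - 2\int_0^1(\phi')^2\,dz = 0$. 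Hence it suffices to solve, for a suitable constant $c>0$, the frozen problem $\phi\phi'' = (\phi')^2 - \phi' - c$, $\phi(0)=\phi(1)=0$; the value of $c$ and the regularity parameter $\alpha$ will be matched at the very end.

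First I would analyze the admissible boundary behavior. Since the coefficient of $\phi''$ vanishes where $\phi=0$, a regular solution must satisfy $(\phi'(0))^2 - \phi'(0) - c = 0$ and likewise at $z=1$; writing $p_\pm = \tfrac{1}{2}(1\pm\sqrt{1+4c})$, I expect $\phi'(0) = p_+ > 0$ and $\phi'(1) = p_- < 0$, so that $\phi>0$ on $(0,1)$ with a single interior maximum. Passing to the phase plane with $p = \phi'$ regarded as a function of $\phi$, the equation becomes separable, $\frac{p\,dp}{p^2-p-c} = \frac{d\phi}{\phi}$, which integrates to the explicit orbit $|p-p_+|^{p_+}\,|p-p_-|^{-p_-} = K\,\phi^{\sqrt{1+4c}}$. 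One checks that for every $K>0$ this orbit connects $(0,p_+)$ to $(0,p_-)$ through a unique turning point $(\phi_*,0)$, so the frozen equation admits a one-parameter family of candidate profiles indexed by their maximum $\phi_*$.

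Next I would impose that the profile occupies exactly $[0,1]$, i.e.\ that the travel time $T(\phi_*) = \int_0^{\phi_*}\bigl(\tfrac{1}{p_{\mathrm{up}}} + \tfrac{1}{|p_{\mathrm{low}}|}\bigr)\,d\phi$ equals $1$; the integrand is integrable at the turning point because $|p|\sim(\phi_*-\phi)^{1/2}$ there. The equation enjoys the scaling symmetry $\phi(z)\mapsto A\,\phi(z/A)$ with the same $c$, under which the maximum and the length of the interval both scale by $A$; hence $T(\phi_*) = g(c)\,\phi_*$ is linear in $\phi_*$, and the condition $T=1$ determines $\phi_*$ uniquely for each $c$. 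Thus for every $c>0$ the frozen problem has a nontrivial solution, which by the automatic identity above also solves the original nonlocal problem. Finally, since $\alpha := 1 - 1/p_+ = 1 - \tfrac{2}{1+\sqrt{1+4c}}$ increases continuously from $0$ to $1$ as $c$ ranges over $(0,\infty)$, each $\alpha\in(0,1)$ is realized by a unique $c$, producing the desired family $\phi_\alpha$.

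It remains to pin down the regularity, which is where the degeneracy really bites. On $(0,1)$ we have $\phi>0$, so the ODE is non-degenerate with analytic nonlinearity and $\phi$ is smooth there. At $z=0$ the expansion $\phi(z) = p_+ z + b\,z^{1+\mu} + \cdots$ forced by the equation has $\mu = 2 - 1/p_+$, fixed by the indicial relation $1 - 2p_+ + p_+\mu = 0$; consequently $\phi''(z)\sim z^{\mu-1} = z^{\alpha}$, which is Hölder continuous of exponent $\alpha = 1-1/p_+$ and no better. The endpoint $z=1$, governed by $p_-<0$, yields a correction exponent exceeding $2$ and is thus at least as regular, so $\phi_\alpha\in C^{2,\alpha}([0,1])$ with sharp exponent $\alpha$. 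The main obstacle throughout is precisely this boundary degeneracy: it renders the initial value problem at the endpoints non-Lipschitz (forcing the selection of the regular branch $p_+$), it is responsible for the finite-but-nonsmooth $C^{2,\alpha}$ regularity, and verifying the convergence of the travel-time integral together with the sharpness of the Hölder exponent are the delicate points that must be carried out in detail.
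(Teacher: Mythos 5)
Your argument is correct and, in substance, lands on exactly the same objects as the paper: your roots $p_\pm=\tfrac12\bigl(1\pm\sqrt{1+4c}\bigr)$ are the paper's $\psi_\pm=\tfrac12\pm\sqrt{m^2+1/4}$ under $c=m^2$, your orbit relation $|p-p_+|^{p_+}|p-p_-|^{-p_-}=K\phi^{\,p_+-p_-}$ is precisely (\ref{phi-solution}), and your exponent $\alpha=1-1/p_+$ equals the paper's $-\psi_-/\psi_+$ because $\psi_++\psi_-=1$. The route differs in two respects. First, the paper simply cites \cite{CINT15} for existence under the constraint (\ref{constraint-m}), whereas you give a self-contained existence argument (phase plane, convergence of the travel-time integral, and the scaling $\phi\mapsto A\phi(\cdot/A)$ to normalize the interval length); your observation that the nonlocal constraint $c=2\int_0^1(\phi')^2\,dz$ is automatic for any solution of the frozen equation with Dirichlet data is a clean point the paper leaves implicit. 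Second, for the $C^{2,\alpha}$ regularity -- which is the actual new content of the proposition relative to \cite{CINT15} -- the paper expands the regularized incomplete Beta function to obtain (\ref{dpsi-dz-2})--(\ref{dpsi-dz-3}) and reads off the H\"older quotient at $z=0$ and the improved behaviour at $z=1$ (where $-\psi_+/\psi_->1$), while you use a Frobenius-type indicial expansion $\phi=p_+z+bz^{1+\mu}+\cdots$ with $\mu=2-1/p_+$ at the left endpoint and its analogue at the right; the indicial relation is correct and reproduces the same exponents at both ends. The one place where your write-up is lighter than the paper's is that this expansion is presented as a formal ansatz; to close it you should extract it from the orbit relation you already have (near $p=p_+$ it gives $p_+-p\sim C\phi^{(p_+-p_-)/p_+}=C\phi^{1+\alpha}$, hence $\phi''\sim z^{\alpha}$, which is what the Beta-function series does for the paper) and then upgrade the pointwise H\"older quotient at the endpoint to a seminorm bound on a neighbourhood, e.g.\ using $|\phi'''|\lesssim z^{\alpha-1}$ there. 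These steps are routine, and you correctly flag them as the places where the boundary degeneracy must be handled.
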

Recall that, for an integer $k$, and $0<\alpha<1$ the space $C^{k,\alpha}$ is endowed with the norm
$$
\|f\|_{C^{k,\alpha}}=\|f\|_{C^k}+\sup_{x\neq y}\frac{|f(x)-f(y)|}{|x-y|^\alpha}.
$$
\begin{proof}
  For each $m>0$, the existence of nontrivial solution to the boundary value problem (\ref{phi}) satisfying the additional constraint
  \begin{equation}\label{constraint-m}
      2\int_0^1 (\phi'(z))^2 dz = m^2
  \end{equation} has been established in \cite{CINT15}. Let $\alpha\in(0,1)$ and define
\begin{equation}
     m : = \sqrt{\Big(\frac{1+\alpha}{2(1-\alpha)}\Big)^2 - \frac{1}{4}}>0.
\end{equation}
That is

\begin{equation}
    \alpha = \frac{\sqrt{m^2+1/4} - \frac{1}{2}}{\sqrt{m^2+1/4} + \frac{1}{2}}.
\end{equation}
Denoting by $\psi := \phi'$, it was shown in \cite{CINT15} that the nontrivial solution $\phi$ of problem (\ref{phi}) satisfying (\ref{constraint-m}) can be written as
  \begin{equation}\label{phi-solution}
      \phi = C(m)(\psi_+ - \psi)^{\frac{\psi_+}{\psi_+-\psi_-}} (\psi- \psi_-)^{\frac{-\psi_-}{\psi_+-\psi_-}},
  \end{equation}
where
\begin{equation}
    \psi_\pm(m) := \pm \sqrt{m^2 + 1/4} + 1/2,
\end{equation}
and
\begin{equation}\label{cm}
    C(m) = \frac{1}{B(\frac{\psi_+}{\psi_+-\psi_-} ,\frac{-\psi_-}{\psi_+-\psi_-} )}.
\end{equation}
Here $B(a,b) = \int_0^1 t^{a-1} (1-t)^{b-1} dt$ is the Beta function. Moreover, it was also shown in \cite{CINT15} that $\phi$, $\psi$ and $z$ satisfy
\begin{equation}\label{boundary-psipm}
    (\phi,\psi)(z=0) = (0,\psi_+),\;\;\;\; (\phi,\psi)(z=1) = (0,\psi_-), \;\;\;\; \psi_-\leq \psi \leq \psi_+,
\end{equation}
and
\begin{equation}\label{dpsi-dz}
    \frac{d\psi}{dz} = \frac{-1}{C(m)} (\psi_+ - \psi)^{\frac{-\psi_-}{\psi_+-\psi_-}} (\psi- \psi_-)^{\frac{\psi_+}{\psi_+-\psi_-}}.
\end{equation}
Therefore, $\psi$ is a continuous and decreasing function of $z$, and smooth in $(0,1)$. From (\ref{boundary-psipm}) and (\ref{dpsi-dz}), one has
\begin{equation}\label{z-psi}
    z(\psi) = -C(m) \int_{\psi_+}^\psi (\psi_+ - \tilde{\psi})^{\frac{\psi_-}{\psi_+-\psi_-}} (\tilde{\psi}- \psi_-)^{\frac{-\psi_+}{\psi_+-\psi_-}} d\tilde{\psi},
\end{equation}
and
\begin{equation}\label{z-psi-2}
    z(\psi) -1 = -C(m) \int_{\psi_-}^\psi (\psi_+ - \tilde{\psi})^{\frac{\psi_-}{\psi_+-\psi_-}} (\tilde{\psi}- \psi_-)^{\frac{-\psi_+}{\psi_+-\psi_-}} d\tilde{\psi}.
\end{equation}
Next, we establish that $\phi(z)\in  C^{2,\alpha}([0,1])$. From (\ref{phi-solution}) and (\ref{dpsi-dz}), we know when $\psi$ is away from $\psi_+$ and $\psi_-$, i.e., $z$ is away from $0$ and $1$, $\phi(z)$ is smooth. Therefore, we only need to consider when $\psi$ is close to $\psi_+$ and $\psi_-$. From (\ref{z-psi}), one has
\begin{equation}
    z(\psi) = C(m) B(\frac{\psi_+-\psi}{\psi_+-\psi_-};\frac{\psi_+}{\psi_+-\psi_-} ,\frac{-\psi_-}{\psi_+-\psi_-} ),
\end{equation}
where $B(x; a, b) = \int_0^x t^{a-1} (1-t)^{b-1} dt$ is the incomplete Beta function ($0\leq x\leq 1$). Moreover, from (\ref{cm}), we know that
\begin{equation}
    z(\psi) = \frac{ B(\frac{\psi_+-\psi}{\psi_+-\psi_-};\frac{\psi_+}{\psi_+-\psi_-} ,\frac{-\psi_-}{\psi_+-\psi_-} )}{B(\frac{\psi_+}{\psi_+-\psi_-} ,\frac{-\psi_-}{\psi_+-\psi_-})} = I( \frac{\psi_+-\psi}{\psi_+-\psi_-}; \frac{\psi_+}{\psi_+-\psi_-} ,\frac{-\psi_-}{\psi_+-\psi_-}),
\end{equation}
where $I(x;a,b) = \frac{B(x;a,b)}{B(a,b)}$ is the regularized Beta function. When $x\in(0,1)$, by series expansion (cf. \cite[p.~944]{AS73}), one has
\begin{equation}
    I(x;a,b) = \frac{x^a (1-x)^b}{aB(a,b)}\Big\{1+ \sum\limits_{n=0}^\infty \frac{B(a+1,n+1)}{B(a+b,n+1)}x^{n+1}  \Big\}.
\end{equation}
Therefore, for $\psi_- < \psi < \psi_+$, we can write
\begin{equation}\label{z-series-1}
    z(\psi) = \frac{C(m)}{\psi_+} (\psi_+-\psi)^{\frac{\psi_+}{\psi_+-\psi_-}}(\psi-\psi_-)^{\frac{-\psi_-}{\psi_+-\psi_-}} \Big\{ 1+ \sum\limits_{n=0}^\infty \frac{B(\frac{\psi_+}{\psi_+-\psi_-}+1,n+1)}{B(1,n+1)}\Big(\frac{\psi_+-\psi}{\psi_+-\psi_-}\Big)^{n+1}   \Big\}.
\end{equation}
Letting
\begin{equation}
    h_1(\psi) : = \sum\limits_{n=0}^\infty \frac{B(\frac{\psi_+}{\psi_+-\psi_-}+1,n+1)}{B(1,n+1)}\Big(\frac{\psi_+-\psi}{\psi_+-\psi_-}\Big)^{n+1},
\end{equation}
then $h_1(\psi)\geq 0$ and $h_1(\psi)$ is smooth on $\psi\in (\psi_-, \psi_+]$. Combine (\ref{dpsi-dz}) and (\ref{z-series-1}), we find that for $z\in(0,1)$,
\begin{equation}\label{dpsi-dz-2}
    \frac{d\psi}{dz} = -C(m)^{\frac{\psi_- - \psi_+}{\psi_+}}\Big(\frac{\psi_+}{1+h_1(\psi(z))}\Big)^{\frac{-\psi_-}{\psi_+}}\Big(\psi(z) - \psi_-\Big)^{\frac{\psi_+ + \psi_-}{\psi_+}} z^{\frac{-\psi_-}{\psi_+}}.
\end{equation}
From this expression and since $h_1(\psi(z)))$ is smooth on $z\in[0,1)$, we conclude that $\frac{d\psi}{dz}$ is continuous on $z\in[0,1)$, and smooth on $z\in(0,1)$. Observe that $\alpha = \frac{-\psi_-}{\psi_+}$, thus we have
\begin{equation}
\begin{split}
    \lim\limits_{z\rightarrow 0^+}\frac{|\frac{d\psi}{dz}(z) - \frac{d\psi}{dz}(0)|}{|z-0|^{\alpha}} &= \lim\limits_{z\rightarrow 0^+} C(m)^{\frac{\psi_- - \psi_+}{\psi_+}}\Big(\frac{\psi_+}{1+h_1(\psi(z))}\Big)^{\frac{-\psi_-}{\psi_+}}\Big(\psi(z) - \psi_-\Big)^{\frac{\psi_+ + \psi_-}{\psi_+}}\\
    & = \lim\limits_{\psi\rightarrow \psi_+} C(m)^{\frac{\psi_- - \psi_+}{\psi_+}}\Big(\frac{\psi_+}{1+h_1(\psi)}\Big)^{\frac{-\psi_-}{\psi_+}}(\psi - \psi_-)^{\frac{\psi_+ + \psi_-}{\psi_+}}\\
    & = C(m)^{\frac{\psi_- - \psi_+}{\psi_+}} \psi_+^{\frac{-\psi_-}{\psi_+}}(\psi_+ - \psi_-)^{\frac{\psi_+ + \psi_-}{\psi_+}} < \infty.
\end{split}
\end{equation}
Therefore, $\psi(z) \in C^{1,\alpha}([0,1))$, and thus $\phi(z)\in C^{2,\alpha}([0,1))$. Similarly, from (\ref{z-psi-2}), for $\psi_- < \psi < \psi_+$, we can write
\begin{equation}\label{z-series-2}
\begin{split}
        1-z(\psi) &= I(\frac{\psi-\psi_-}{\psi_+-\psi_-}; \frac{-\psi_-}{\psi_+-\psi_-}, \frac{\psi_+}{\psi_+-\psi_-}) \\
        & = \frac{-C(m)}{\psi_-} (\psi-\psi_-)^{\frac{-\psi_-}{\psi_+-\psi_-}} (\psi_+-\psi)^{\frac{\psi_+}{\psi_+-\psi_-}} \Big\{ 1+ \sum\limits_{n=0}^\infty \frac{B(\frac{-\psi_-}{\psi_+-\psi_-}+1,n+1)}{B(1,n+1)}(\frac{\psi-\psi_-}{\psi_+-\psi_-})^{n+1}   \Big\}.
\end{split}
\end{equation}
Letting
\begin{equation}
    h_2(\psi) : = \sum\limits_{n=0}^\infty \frac{B(\frac{-\psi_-}{\psi_+-\psi_-}+1,n+1)}{B(1,n+1)}(\frac{\psi-\psi_-}{\psi_+-\psi_-})^{n+1},
\end{equation}
then $h_2(\psi)\geq 0$ and $h_2(\psi)$ is smooth on $\psi\in [\psi_-, \psi_+)$. Combine (\ref{dpsi-dz}) and (\ref{z-series-2}), we find that
\begin{equation}\label{dpsi-dz-3}
    \frac{d\psi}{dz} = -C(m)^{\frac{\psi_+ - \psi_-}{\psi_-}}\Big(\frac{-\psi_-}{1+h_2(\psi(z))}\Big)^{\frac{-\psi_+}{\psi_-}}\Big(\psi_+ - \psi(z)\Big)^{\frac{\psi_+ + \psi_-}{\psi_-}} (1-z)^{\frac{-\psi_+}{\psi_-}}.
\end{equation}
From this expression and since $h_2(\psi(z)))$ is smooth on $z\in(0,1]$, observe that $\frac{-\psi_+}{\psi_-} >1$ and since $\psi(z) \in C^{1,\alpha}([0,1))$, we know that indeed $\psi(z) \in C^{1,\alpha}([0,1])$. Therefore, $\phi(z)\in C^{2,\alpha}([0,1])$.
\end{proof}
Under the assumption about symmetry on the initial data in system (\ref{EQ2-1})--(\ref{EQ2-4}), we establish the following blowup result.
\begin{theorem}\label{theorem-CINT}
Let $\phi(z)$ be a nontrivial solution of the boundary value problem (\ref{phi}), and let $f(x)$ be a smooth odd periodic function with period 1, satisfying $f'(0)=1$. Suppose that $(u, v, w, p)$ is a smooth solution to system (\ref{EQ2-1})--(\ref{EQ2-4}), subject to the boundary
condition (\ref{BC-1}), with initial condition
\begin{equation}\label{IC-CINT}
   u_0(x, z) = -f(x) \phi'(z), \;\; v_0(x, z) = -\Omega f(x).
\end{equation}
Then the solution blows up at
sometime $\mathcal{T}\in(0,1]$.
\end{theorem}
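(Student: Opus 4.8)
The plan is to exploit the imposed symmetries and reduce the full $2D$ dynamics, restricted to the vertical axis $x=0$, to a scalar integro-differential equation in $(t,z)$ whose solution can be written explicitly and diverges at $t=1$. First I would record the consequences of the symmetry assumptions (which are preserved by the flow): since $u,v$ are odd and $w,p$ are even in $x$, one has $u(t,0,z)=v(t,0,z)=0$ and $w_x(t,0,z)=0$; from the hydrostatic relation (\ref{EQ2-3}) the pressure is $z$-independent; and integrating the incompressibility constraint (\ref{EQ2-4}) against the boundary conditions (\ref{BC-1}) shows that the barotropic mode $\bar u(t,x):=\int_0^1 u\,dz$ is independent of $x$, hence $\bar u\equiv 0$ by oddness. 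Averaging (\ref{EQ2-1}) in $z$ and using $\int_0^1 w u_z\,dz=\int_0^1 u u_x\,dz$ yields a formula for $p_x$, and differentiating once more in $x$ and evaluating along $x=0$ gives $p_{xx}(t,0)=-2\int_0^1 u_x^2\,dz+\Omega\int_0^1 v_x\,dz$.

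Next I would set $a(t,z):=u_x(t,0,z)$ and $b(t,z):=v_x(t,0,z)$ and differentiate (\ref{EQ2-1})--(\ref{EQ2-2}) in $x$, evaluating at $x=0$. Using $u(t,0,z)=0$, $w_x(t,0,z)=0$, and $w(t,0,z)=-\int_0^z a\,dz'$, this produces the closed coupled system
\begin{align*}
    \partial_t a + w\,\partial_z a + a^2 - \Omega b + p_{xx}(t,0) &= 0,\\
    \partial_t b + w\,\partial_z b + a b + \Omega a &= 0,
\end{align*}
with $p_{xx}(t,0)=-2\int_0^1 a^2\,dz+\Omega\int_0^1 b\,dz$. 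The initial data (\ref{IC-CINT}) together with $f'(0)=1$ give $a(0,z)=-\phi'(z)$ and $b(0,z)\equiv-\Omega$.

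The decisive observation is that $b\equiv-\Omega$ is preserved: substituting $b=-\Omega$ into the $b$-equation gives $-\Omega a+\Omega a=0$, and with this value all rotation terms in the $a$-equation cancel, leaving the rotation-free reduced equation $\partial_t a + w\,\partial_z a + a^2 - 2\int_0^1 a^2\,dz = 0$. I would then insert the separable ansatz $a(t,z)=\mu(t)\psi(z)$ with $\psi:=\phi'$, so that $w=-\mu(t)\phi(z)$; writing (\ref{phi}) in the form $\phi\psi'=\psi^2-\psi-2\int_0^1\psi^2\,dz$, every $z$-dependent term collapses and the equation reduces to the scalar Riccati ODE $\mu'+\mu^2=0$ with $\mu(0)=-1$, whose solution $\mu(t)=1/(t-1)$ diverges as $t\to1^-$.

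To conclude, I would invoke uniqueness of smooth solutions of the coupled transport system for $(a,b)$ (via the method of characteristics and a Gronwall estimate on the difference of two solutions), which forces $u_x(t,0,z)=\psi(z)/(t-1)$ on the entire interval of smooth existence. Since $\psi=\phi'$ is nontrivial and continuous, $\|u_x(t,0,\cdot)\|_{L^\infty}\to\infty$ as $t\to1^-$, so a smooth solution cannot persist up to $t=1$; combined with short-time existence this yields a blowup time $\mathcal{T}\in(0,1]$. The main obstacle is the pressure bookkeeping---correctly extracting $p_{xx}(t,0)$ from the vertical average after establishing $\bar u\equiv0$, and verifying that the rotation contributions cancel exactly once $b\equiv-\Omega$---since it is precisely this cancellation, engineered by the $\Omega$-dependent choice of $v_0$, that shows rotation does not arrest the blowup; the separable ansatz and the Riccati reduction are then essentially forced by the structure of (\ref{phi}).
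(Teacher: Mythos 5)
Your proposal is correct and follows essentially the same route as the paper: you restrict to the line $x=0$ using the odd symmetry, derive the reduced system for $a=u_x(t,0,\cdot)$ and $b=v_x(t,0,\cdot)$ (equivalently the paper's $(W,V)=(w,-v_x)|_{x=0}$ with $a=-W_z$, $b=-V$), note that the $\Omega$-dependent choice of $v_0$ makes the rotation terms cancel, exhibit the explicit solution $u_x(t,0,z)=\phi'(z)/(t-1)$ (the paper's self-similar solution $W=\phi(z)/(1-t)$, $V\equiv\Omega$), and close the argument with a uniqueness result for the reduced system proved by a Gr\"onwall estimate, exactly as in the paper's Proposition \ref{theorem-uniqueness}. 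The only cosmetic difference is that you phrase the explicit solution as a separable ansatz leading to the Riccati equation $\mu'+\mu^2=0$ rather than verifying the self-similar profile directly.
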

Before proving this theorem, let us simplify system (\ref{EQ2-1})--(\ref{EQ2-4}) further. From (\ref{EQ2-4}) and (\ref{BC-1}), we know
\begin{equation} \label{ux-barotropic}
    \int_0^1 u_x(t,x,z) dz = 0.
\end{equation}
Differentiating (\ref{EQ2-1}) and (\ref{EQ2-2}) with respect to $x$, we have
\begin{eqnarray}
&&\hskip-.8in u_{xt} + u\, u_{xx} + u_x^2 + w_x u_z + wu_{xz} -\Omega \, v_x+
p_{xx} = 0 , \label{EQx-1}  \\
&&\hskip-.8in v_{xt} + u_x\, v_x  + uv_{xx} + w_x
v_z + wv_{xz} +\Omega\, u_x = 0 . \label{EQx-2}
\end{eqnarray}
Thanks to (\ref{ux-barotropic}), integrating (\ref{EQx-1}) with respect to $z$ over the interval $[0,1]$, an integration by parts together with (\ref{EQ2-3}),  (\ref{EQ2-4}) and (\ref{BC-1}), implies:
\begin{eqnarray}
&&\hskip-.8in  p_{xx} = \int_0^1 \Big[ -2(uu_x)_x + \Omega v_x \Big] dz. \label{EQpxx}
\end{eqnarray}
Let
$$W(t,z) =w(t,0,z), \;\;\; V(t,z) = -v_x(t,0,z).$$
Plugging (\ref{EQpxx}) back to (\ref{EQx-1}), and by virtue of the oddness of $u$ and $v$ (thanks to (\ref{EQ2-4})), system (\ref{EQx-1})--(\ref{EQx-2}) restricts on the line $x=0$ becomes
\begin{eqnarray}
&&\hskip-.8in
W_{tz} - (W_z)^2 + WW_{zz} + 2\int_0^1 W_z^2(t,z) dz - \Omega V + \Omega \int_0^1 V(t,z)dz= 0, \label{reduced-case1-1} \\
&&\hskip-.8in
V_{t} - W_z V + WV_{z} + \Omega W_z = 0. \label{reduced-case1-2}
\end{eqnarray}
By virtue of (\ref{BC-1}) and (\ref{IC-CINT}), the corresponding initial and boundary conditions are
\begin{equation}\label{reduced-case1-IC}
    W(0,z) = \phi(z), \;\; V(0,z) = \Omega,
\end{equation}
\begin{equation}
    W(t,0)=W(t,1) = 0. \label{reduced-case1-BC}
\end{equation}
We have the following proposition concerning the uniqueness of solutions to system (\ref{reduced-case1-1})--(\ref{reduced-case1-BC}).
\begin{proposition}\label{theorem-uniqueness}
  Let $(W_1, V_1)$ and $(W_2, V_2)$ be two solutions of the same initial boundary value problem  (\ref{reduced-case1-1})--(\ref{reduced-case1-BC}) satisfying the regularity $W_1, W_2 \in L^2(0,\mathcal{T}; H^2)$ and $V_1, V_2 \in L^2(0,\mathcal{T}; H^1)$, with the same initial data. Then $W_1 = W_2$ and $V_1=V_2$ for any $t\in [0,\mathcal{T})$.
\end{proposition}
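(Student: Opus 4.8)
The plan is to run a standard energy--Gr\"onwall argument on the difference of the two solutions. Set $\bar W:=W_1-W_2$ and $\bar V:=V_1-V_2$, and subtract the two copies of \eqref{reduced-case1-1}--\eqref{reduced-case1-2}. The nonlinearities linearize cleanly via $(W_{1,z})^2-(W_{2,z})^2=(W_{1,z}+W_{2,z})\bar W_z$, $\;W_1W_{1,zz}-W_2W_{2,zz}=W_1\bar W_{zz}+\bar W\,W_{2,zz}$, $\;W_{1,z}V_1-W_{2,z}V_2=W_{1,z}\bar V+\bar W_z V_2$, and $W_1V_{1,z}-W_2V_{2,z}=W_1\bar V_z+\bar W\,V_{2,z}$. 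This yields a system that is linear in $(\bar W,\bar V)$, with coefficients assembled from $W_1,W_2,V_1,V_2$ and their first/second derivatives, together with the nonlocal averages $\int_0^1(W_{1,z}+W_{2,z})\bar W_z\,dz$ and $\int_0^1\bar V\,dz$, subject to $\bar W(t,0)=\bar W(t,1)=0$ and vanishing initial data.

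Next I would test the difference of \eqref{reduced-case1-1} against $\bar W_z$ and the difference of \eqref{reduced-case1-2} against $\bar V$, integrate over $z\in[0,1]$, and add. Using $\int_0^1\bar W_{tz}\bar W_z\,dz=\tfrac12\tfrac{d}{dt}\|\bar W_z\|_{L^2(0,1)}^2$, the two evolution terms combine into $\tfrac{d}{dt}E$, where $E(t):=\tfrac12\|\bar W_z(t)\|_{L^2(0,1)}^2+\tfrac12\|\bar V(t)\|_{L^2(0,1)}^2$. The transport contributions $\int_0^1 W_1\bar W_{zz}\bar W_z\,dz$ and $\int_0^1 W_1\bar V_z\bar V\,dz$ are integrated by parts in $z$; the boundary terms carry the factor $W_1(t,0)=W_1(t,1)=0$ from \eqref{reduced-case1-BC} and hence vanish, leaving only $-\tfrac12\int_0^1 W_{1,z}\bar W_z^2\,dz$ and $-\tfrac12\int_0^1 W_{1,z}\bar V^2\,dz$.

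Each remaining term is then bounded by $a(t)E(t)$. The local terms use the one-dimensional embedding $H^1(0,1)\hookrightarrow L^\infty(0,1)$ to control $\|W_{i,z}\|_{L^\infty}\lesssim\|W_i\|_{H^2}$ and $\|V_i\|_{L^\infty}\lesssim\|V_i\|_{H^1}$; the terms involving $\bar W$ itself (such as $\int_0^1\bar W\,W_{2,zz}\bar W_z\,dz$ and $\int_0^1\bar W\,V_{2,z}\bar V\,dz$) exploit $\bar W(t,\cdot)=\int_0^{\cdot}\bar W_z\,dz'$ with $\bar W(t,0)=0$, so that $\|\bar W\|_{L^\infty}\le\|\bar W_z\|_{L^2}$; and the nonlocal and Coriolis coupling terms are handled by Cauchy--Schwarz together with Young's inequality, distributing the cross products $\bar W_z\bar V$ into both components of $E$. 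Collecting everything gives $\tfrac{d}{dt}E\le a(t)E$ with $a(t)\le C\big(1+|\Omega|+\|W_1\|_{H^2}+\|W_2\|_{H^2}+\|V_1\|_{H^1}+\|V_2\|_{H^1}\big)$.

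Since $W_i\in L^2(0,\mathcal T;H^2)$ and $V_i\in L^2(0,\mathcal T;H^1)$, one has $a\in L^2(0,\mathcal T)\subset L^1(0,\mathcal T)$, which is exactly the integrability Gr\"onwall's inequality requires; as the initial data coincide, $E(0)=0$, so $E\equiv0$ on $[0,\mathcal T)$. Thus $\bar W_z\equiv0$ and $\bar V\equiv0$, and since $\bar W(t,0)=0$ the Poincar\'e inequality gives $\bar W\equiv0$, i.e. $W_1=W_2$ and $V_1=V_2$. I expect the main obstacle to be not any single estimate but two structural points: first, the coefficients are only square-integrable (not bounded) in $t$, so one must be sure every nonlinear term appears in the form $a(t)E(t)$ with $a\in L^1$ rather than needing $a\in L^\infty$; and second, because the equations only supply $W_{tz},V_t\in L^1(0,\mathcal T;L^2)$, the identity $\int_0^1\bar W_{tz}\bar W_z\,dz=\tfrac12\tfrac{d}{dt}\|\bar W_z\|_{L^2}^2$ (and its analogue for $\bar V$) must first be justified by a Lions--Magenes-type lemma, mollifying in time and passing to the limit, after which the Gr\"onwall step is routine.
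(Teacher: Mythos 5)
Your proposal is correct and follows essentially the same route as the paper: subtract the two copies of the system, test the $W$-difference equation against $\bar W_z$ and the $V$-difference equation against $\bar V$, integrate by parts using $\bar W(t,0)=\bar W(t,1)=0$, bound everything by $a(t)E(t)$ with $a\in L^1(0,\mathcal T)$ via the one-dimensional Sobolev embedding and $\|\bar W\|_{L^\infty}\le\|\bar W_z\|_{L^2}$, and conclude by Gr\"onwall and Poincar\'e. The only cosmetic differences are that the paper symmetrizes using the averages $\tfrac12(W_1+W_2)$, $\tfrac12(V_1+V_2)$ and observes that the Coriolis terms cancel exactly (since $\int_0^1\bar W_z\,dz=0$), whereas you estimate them, which is equally valid; your remark on justifying $\int_0^1\bar W_{tz}\bar W_z\,dz=\tfrac12\tfrac{d}{dt}\|\bar W_z\|_{L^2}^2$ is a reasonable extra precaution that the paper passes over silently.
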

\begin{proof}
Denote by $W = W_1 - W_2$, $\bar{W}= \frac{1}{2}(W_1+W_2)$, $V=V_1-V_2$, and $\bar{V}= \frac{1}{2}(V_1+V_2)$. Then (\ref{reduced-case1-1})--(\ref{reduced-case1-BC}) implies that
\begin{eqnarray}
&&\hskip-.8in
W_{tz} - 2\bar{W}_z W_z + W \bar{W}_{zz} + \bar{W}W_{zz} + 4\int_0^1 \bar{W}_z W_z dz - \Omega V + \Omega \int_0^1 V dz =0, \label{uniqueness-W}\\
&&\hskip-.8in
V_t - W_z \bar{V} - \bar{W}_z V + W\bar{V}_z + \bar{W}V_z + \Omega W_z = 0, \label{Uniqueness-V}
\end{eqnarray}
with boundary condition
\begin{equation}
    W(t,0)=W(t,1)=0. \label{Uniqueness-BC}
\end{equation}
Multiplying (\ref{uniqueness-W}) by $W_z$, and (\ref{Uniqueness-V}) by $V$, integrating with respect to $z$ over the interval $[0,1]$, then an integration by parts together with the boundary condition (\ref{Uniqueness-BC}) gives
\begin{eqnarray}
&&\hskip-.8in
\frac{1}{2}\frac{d}{dt}\Big(\|W_z\|^2_{L^2(0,1)}+ \|V\|^2_{L^2(0,1)}\Big)=\int_0^1 \Big(\frac{5}{2} \bar{W}_z W_z^2  - \bar{W}_{zz} W W_{z} +  W_z \bar{V} V  + \frac{3}{2} \bar{W}_z V^2 - W \bar{V}_z V\Big) dz.
\end{eqnarray}
Using H\"older inequality, Young's inequality, Sobolev inequality, and thanks to (\ref{Uniqueness-BC}), we can apply Poincar\'e inequality to obtain
\begin{eqnarray}
&&\hskip-.8in
\frac{d}{dt}\Big(\|W_z\|^2_{L^2(0,1)}+ \|V\|^2_{L^2(0,1)}\Big) \leq C\Big(\|\bar{W}\|_{H^2(0,1)} + \|\bar{V}\|_{H^1(0,1)}\Big)\Big(\|W_z\|^2_{L^2(0,1)}+ \|V\|^2_{L^2(0,1)}\Big).
\end{eqnarray}
Thanks to Gr\"onwall inequality, and since $W_z(0,z) = V(0,z) = 0$, for any $t\in[0,\mathcal{T})$, we have
\begin{eqnarray}
&&\hskip-.8in
\|W_z(t)\|^2_{L^2(0,1)}+ \|V(t)\|^2_{L^2(0,1)} \leq \Big(\|W_z(0)\|^2_{L^2(0,1)}+ \|V(0)\|^2_{L^2(0,1)}\Big) \nonumber\\
&&\hskip.8in
\times\exp\Big(C\int_0^t \|\bar{W}(s)\|_{H^2(0,1)} + \|\bar{V}(s)\|_{H^1(0,1)} ds\Big) = 0.
\end{eqnarray}
By Poincar\'e inequality, we conclude that $\|W(t)\|_{L^2(0,1)} = 0$. Thus, $W_1 = W_2$ and $V_1 = V_2$ for any $t\in[0,\mathcal{T})$.
\end{proof}
Now let us return to the proof of Theorem \ref{theorem-CINT}.
\begin{proof}(Proof of Theorem \ref{theorem-CINT})
From Proposition \ref{theorem-phi}, we conclude that $\phi(z)\in H^2(0,1)$. Thanks to Proposition \ref{theorem-uniqueness}, one can observe that
\begin{equation}\label{selfsimilar}
    W(t,z) = \frac{\phi(z)}{1-t}, \;\;\;\; V(t,z) \equiv \Omega
    \end{equation}
is the unique solution of (\ref{reduced-case1-1})--(\ref{reduced-case1-2}) subject to initial and boundary conditions (\ref{reduced-case1-IC})--(\ref{reduced-case1-BC}). Then we see $W(t,z)$ blows up at $t=1$, and therefore, the solution $(u,v,w,p)$ must blow up at sometime $\mathcal{T}\in (0,1]$.
\end{proof}
The above approach follows from \cite{CINT15}. In addition, we provide another approach that adopts ideas from \cite{W12}. This approach requires some additional conditions on the initial data, but avoids technical issue on the function $\phi$ as in Proposition \ref{theorem-phi}. Moreover, this approach allows the initial data to be analytic, which guarantees the existence of solutions to the inviscid PEs in the space of analytic functions thanks to the results in \cite{ILT20,KTVZ11}.
\begin{theorem}\label{theorem-Wong}
  Suppose that $(u, v, w, p)$ is a smooth solution to system (\ref{EQ2-1})--(\ref{EQ2-4}), subject to the boundary condition (\ref{BC-1}), with initial condition $(u_0,v_0)$ satisfying the following conditions:
\begin{equation}\label{IC-Wong-1}
   u_0(x,z) \; \text{and } v_0(x,z) \; \text{are smooth odd periodic functions in } x \; \text{with period } 1,
\end{equation}
\begin{equation}\label{IC-Wong-2}
   u_0(x,z)  \; \text{satisfies the compatibility condition } \int_0^1 u_0(x,z)dz = 0,
\end{equation}
\begin{equation}\label{IC-Wong-3}
   \partial_x v_0(0,z) = -\Omega \; \text{ for all } z\in[0,1],
\end{equation}
\begin{equation}\label{IC-Wong-4}
   \partial_{xz} u_0(0,0) = 0, \; \partial_{xzz} u_0(0,z)<0 \; \text{ for all } z\in[0,1].
\end{equation}
Then the solution blows up at
sometime $\mathcal{T}\in(0,\frac{-3}{\partial_x u_0(0,1)}]$.
\end{theorem}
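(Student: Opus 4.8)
The plan is to reduce the problem, exactly as in the derivation of (\ref{reduced-case1-1})--(\ref{reduced-case1-2}) preceding Theorem~\ref{theorem-CINT}, to a scalar evolution law along the symmetry axis $x=0$ and then exhibit a Riccati-type blowup for a boundary trace. By (\ref{IC-Wong-1}) the oddness of $u,v$ (and evenness of $w,p$) is preserved, so I may again introduce $W(t,z)=w(t,0,z)$ and $V(t,z)=-v_x(t,0,z)$, which satisfy (\ref{reduced-case1-1})--(\ref{reduced-case1-2}). Condition (\ref{IC-Wong-3}) gives $V(0,\cdot)\equiv\Omega$, and since the constant $V\equiv\Omega$ solves the transport equation (\ref{reduced-case1-2}) with this datum, uniqueness forces $V(t,\cdot)\equiv\Omega$ for all $t$; this is precisely the mechanism cancelling the Coriolis contributions. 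Setting $r:=W_z=-u_x(t,0,\cdot)$, equation (\ref{reduced-case1-1}) collapses to $r_t+Wr_z=r^2-2M(t)$ with $M(t):=\int_0^1 r^2\,dz$, and (\ref{ux-barotropic}) together with (\ref{IC-Wong-2}) yields the mean-zero constraint $\int_0^1 r\,dz=0$.

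Next I would propagate the geometric shape of $r(t,\cdot)$. Differentiating the $r$-equation once and twice in $z$, and using $W_z=r$ and the boundary condition $W(t,0)=W(t,1)=0$, I obtain along the characteristics $\dot z=W(t,z)$ the laws $\tfrac{d}{dt}r_z=r\,r_z$ and $\tfrac{d}{dt}r_{zz}=(r_z)^2\ge 0$. Since $W$ vanishes at $z=0,1$, these endpoints are fixed points of the flow, which is an increasing diffeomorphism of $[0,1]$ for as long as the solution stays smooth. From (\ref{IC-Wong-4}) one has $r_z(0,0)=-\partial_{xz}u_0(0,0)=0$ and $r_{zz}(0,\cdot)=-\partial_{xzz}u_0(0,\cdot)>0$. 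The first law, being linear in $r_z$, keeps $r_z(t,0)\equiv 0$; the second law shows $r_{zz}$ can only increase along characteristics, so $r_{zz}(t,\cdot)>0$ for all $t$. Hence $r(t,\cdot)$ remains strictly convex, and convexity together with $r_z(t,0)=0$ forces $r_z\ge 0$, so $r(t,\cdot)$ is increasing with its maximum at $z=1$.

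The heart of the argument is a sharp bound on the nonlocal term $M$. Writing $r_1(t):=r(t,1)$ and $g:=r_1-r\ge 0$, convexity of $r$ makes $g$ concave, and $\int_0^1 g=r_1$ by mean-zeroness. The Favard--Berwald inequality for nonnegative concave functions on $[0,1]$ gives $\int_0^1 g^2\le \tfrac43\bigl(\int_0^1 g\bigr)^2=\tfrac43 r_1^2$, whence
\[
M=\int_0^1 r^2\,dz=\int_0^1 g^2\,dz-r_1^2\le \tfrac13 r_1^2 .
\]
Evaluating the $r$-equation at the fixed point $z=1$ (where $W=0$) then yields the closed differential inequality $\tfrac{d}{dt}r_1=r_1^2-2M\ge \tfrac13 r_1^2$. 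Finally, (\ref{IC-Wong-4}) makes $\partial_x u_0(0,\cdot)$ strictly decreasing with zero mean, so $r_1(0)=-\partial_x u_0(0,1)>0$; integrating the Riccati inequality forces $r_1(t)\to+\infty$ no later than $3/r_1(0)=-3/\partial_x u_0(0,1)$, so the smooth solution cannot persist past $\mathcal{T}\le -3/\partial_x u_0(0,1)$.

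The main obstacle is the nonlocal self-coupling $-2M$ in the scalar equation, which competes with the Riccati driving term $r_1^2$: a naive single-point argument fails because $M$ a priori controls $r_1$. The crux is therefore the sharp estimate $M\le \tfrac13 r_1^2$, and everything that makes it available—propagating convexity and the vanishing of $r_z$ at $z=0$ so that the maximum sits at $z=1$, and invoking the Favard--Berwald inequality, which is exactly where the constant $3$ (and hence the explicit lifespan bound) originates. I would also take care to justify that all the ``for all $z$'' conclusions pass from the Lagrangian characteristics to the Eulerian variable, which is legitimate precisely while the solution remains smooth, i.e.\ on $[0,\mathcal{T})$.
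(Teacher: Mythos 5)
Your proposal is correct and follows essentially the same route as the paper: the same reduction to $W(t,z)=w(t,0,z)$ with $V\equiv\Omega$ killing the Coriolis terms, the same propagation of $\partial_{xz}u(t,0,0)=0$ and of convexity of $W_z$ along characteristics (including the backward-flow argument for surjectivity), and the same Riccati inequality $\frac{d}{dt}W_z(t,1)\ge\frac13 W_z(t,1)^2$ at the fixed endpoint $z=1$. The only difference is that where the paper simply cites Wong's Lemma~2.5 for the bound $\int_0^1 f^2\,dz\le\frac13 f(1)^2$, you supply a self-contained proof of it via the Favard--Berwald inequality $\int_0^1 g^2\,dz\le\frac43\bigl(\int_0^1 g\,dz\bigr)^2$ applied to the nonnegative concave function $g=r_1-r$, which correctly reproduces the constant $\frac13$ and hence the lifespan bound $-3/\partial_x u_0(0,1)$.
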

Before proving this theorem, we first state the following lemma, which is Lemma 2.4 in \cite{W12}. Since our settings are slightly different from \cite{W12}, we provide a detailed proof in here.
\begin{lemma}\label{theorem-lemma-Wong-invariant}(see \cite{W12} Lemma 2.4)
  The smooth solution $(u,v,w,p)$ stated in Theorem \ref{theorem-Wong} satisfies
  \begin{equation}
      \partial_{xz}u(t,0,0) = 0,
  \end{equation}
  and, as long as the solution remains smooth at time $t$, we have
  \begin{equation}
      \partial_{xzz}u(t,0,z) < 0, \; \text{ for all } z\in[0,1].
  \end{equation}
 In other words, condition (\ref{IC-Wong-4}) is invariant in time.
\end{lemma}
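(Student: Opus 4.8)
The plan is to carry out a one-dimensional Lagrangian analysis on the line $x=0$, tracking the two scalar functions $a(t,z) := \partial_{xz}u(t,0,z)$ and $b(t,z) := \partial_{xzz}u(t,0,z)$ along the vertical flow generated by $W(t,z) := w(t,0,z)$. The oddness of $u$ and $v$ in $x$ gives $u(t,0,z)=0$ and $w_x(t,0,z)=0$, so the horizontal velocity vanishes on $x=0$; hence the line $x=0$ is invariant under the flow and the relevant characteristics solve $\dot Z(t)=W(t,Z(t))$. Since $W(t,0)=W(t,1)=0$ by the no-normal-flow condition (\ref{BC-1}), the endpoints are fixed points and $z_0\mapsto Z(t;z_0)$ is a diffeomorphism of $[0,1]$ onto itself. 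I write $\tfrac{D}{Dt}=\partial_t+W\partial_z$ for the material derivative and set $A(t,z):=u_x(t,0,z)=-W_z(t,z)$, using (\ref{EQ2-4}).

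The crucial preliminary step is to show that $V(t,z):=-v_x(t,0,z)\equiv\Omega$. Differentiating (\ref{EQ2-2}) in $x$ and restricting to $x=0$ (using $u(t,0,z)=0$ and $w_x(t,0,z)=0$) produces exactly equation (\ref{reduced-case1-2}), which in material form reads $\tfrac{D}{Dt}(V-\Omega)=-A\,(V-\Omega)$. By hypothesis (\ref{IC-Wong-3}) the initial datum is $V(0,z)=-\partial_x v_0(0,z)=\Omega$, so $V-\Omega$ vanishes initially along every characteristic; uniqueness for this linear ODE along characteristics forces $V\equiv\Omega$, and in particular $V_z\equiv0$. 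This is the observation that removes the Coriolis coupling from the equation for $a$.

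Next I would derive the evolution of $a$ and $b$. Restricting the $x$-differentiated momentum equation (\ref{EQx-1}) to $x=0$ yields a Riccati-type relation $A_t+A^2+W A_z+\Omega V+P(t)=0$, in which the pressure enters only through the $z$-independent term $P(t)=p_{xx}(t,0)$ (because $p_z=0$). Differentiating once in $z$, using $V_z\equiv0$, $A_z=a$, $W_z=-A$, and $\partial_z P=0$, the pressure and Coriolis terms drop out and I obtain $\tfrac{Da}{Dt}=-A\,a$; differentiating a second time gives the sign-definite relation $\tfrac{Db}{Dt}=-a^2$. For the first assertion, evaluate along the fixed-point characteristic $Z\equiv0$: there $\tfrac{d}{dt}a(t,0)=-A(t,0)\,a(t,0)$ with $a(0,0)=\partial_{xz}u_0(0,0)=0$ by (\ref{IC-Wong-4}), so $a(t,0)\equiv0$, i.e. $\partial_{xz}u(t,0,0)=0$. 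For the second, since $\tfrac{Db}{Dt}=-a^2\le0$, $b$ is nonincreasing along every characteristic; as the flow maps $[0,1]$ onto itself, each $z\in[0,1]$ equals $Z(t;z_0)$ for some $z_0$, whence $b(t,z)=b(0,z_0)-\int_0^t a^2\,ds\le b(0,z_0)=\partial_{xzz}u_0(0,z_0)<0$ by (\ref{IC-Wong-4}).

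The underlying computations are short, so the real content lies elsewhere. I expect the main care to be needed in justifying the transport representation rigorously: that the vertical characteristics $\dot Z=W$ are globally defined on $[0,1]$ and preserve the endpoints, so that the pointwise initial sign of $b$ is genuinely propagated to all of $[0,1]$. The second delicate point is purely structural but essential, namely that restricting (\ref{EQx-1}) to $x=0$ produces a pressure contribution that is independent of $z$, which is what lets it vanish upon differentiation in $z$; together with the identity $V\equiv\Omega$, this is what leaves the clean, sign-definite relation $\tfrac{Db}{Dt}=-a^2$ that closes the argument.
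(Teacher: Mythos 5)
Your proposal is correct and follows essentially the same route as the paper: restrict to the invariant line $x=0$, reduce to the transported quantities along the vertical characteristics $\dot Z = W$, differentiate once in $z$ to get a linear homogeneous ODE for $\partial_{xz}u(t,0,0)$ at the fixed endpoint, differentiate twice to get the sign-definite relation $\tfrac{D}{Dt}\partial_{xzz}u = -(\partial_{xz}u)^2$, and use surjectivity of the flow map on $[0,1]$ (which the paper makes explicit via the time-reversed characteristic) to propagate the pointwise sign. The only cosmetic differences are that you obtain $V\equiv\Omega$ by a direct transport argument along characteristics rather than by invoking Proposition \ref{theorem-uniqueness}, and you work with $u_x$ and its $z$-derivatives instead of the sign-flipped quantities $W_z, W_{zz}, W_{zzz}$.
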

\begin{proof}
  For arbitrary $y_0\in \mathbb{R}$ and $z_0\in[0,1]$, consider the following system of characteristic equations
\begin{equation} \label{ODE}
    \begin{cases}
    \frac{dX}{dt} = u(t,X,Z),\\
    \frac{dY}{dt} = v(t,X,Z), \\
    \frac{dZ}{dt} = w(t,X,Z)
    \end{cases}
\end{equation}
with the initial data
\begin{equation} \label{ODE-IC}
    \begin{cases}
    X(0) = 0,\\
    Y(0) = y_0,\\
    Z(0) = z_0.
    \end{cases}
\end{equation}
By virtue of oddness of $u$ and $v$ in the $x$ variable, the solution $(X,Y,Z)$ must satisfies
\begin{equation}
    X(t) \equiv 0, \;\;\;\; Y(t)\equiv y_0.
\end{equation}
It means that particles starting from the line segment
\begin{equation}
   L: = \Big \{(x,y,z): x=0,\; y=y_0, \;z\in[0,1] \Big \}
\end{equation}
can only move along this line segment. Moreover, when $z_0 = 0$ or $z_0 = 1$, thanks to the boundary condition (\ref{BC-1}), the solution must satisfy additionally $Z(t)\equiv 0$ or $Z(t)\equiv 1$, respectively. This means that the particles stationed at $(0,y_0,0)$ and $(0,y_0,1)$ do not move. Now we follow the same procedures as in the proof of Theorem \ref{theorem-CINT}, and denote by
\begin{equation}
    W(t,z) = w(t,0,z), \;\;\; V(t,z) = -v_x(t,0,z).
\end{equation}
We obtain again the system
\begin{eqnarray}
&&\hskip-.8in
W_{tz} - (W_z)^2 + WW_{zz} + 2\int_0^1 W_z^2(t,z) dz - \Omega V + \Omega \int_0^1 V(t,z)dz= 0, \label{reduced-case2-1} \\
&&\hskip-.8in
V_{t} - W_z V + WV_{z} + \Omega W_z = 0. \label{reduced-case2-2}
\end{eqnarray}
By virtue of (\ref{EQ2-4}), (\ref{BC-1}) and (\ref{IC-Wong-3}), the corresponding initial and boundary conditions are
\begin{equation}\label{reduced-case2-IC}
    W(0,z) = -\int_0^z \partial_x u_0(0,s) ds, \;\; V(0,z) = \Omega,
\end{equation}
\begin{equation}
    W(t,0)=W(t,1) = 0. \label{reduced-case2-BC}
\end{equation}
From (\ref{reduced-case2-IC}), thanks to Proposition \ref{theorem-uniqueness}, we observe that $V \equiv \Omega$ is the unique solution to equation (\ref{reduced-case2-2}). Plugging this back into equation (\ref{reduced-case2-1}), we obtain
\begin{eqnarray}\label{solution-W-2}
&&\hskip-.8in
W_{tz} - (W_z)^2 + WW_{zz} + 2\int_0^1 W_z^2(t,z) dz = 0.
\end{eqnarray}
By taking one derivative with respect to $z$ of (\ref{solution-W-2}), we have
\begin{equation}\label{one-z-derivative}
     W_{tzz} - W_{z} W_{zz} + W W_{zzz} = 0.
\end{equation}
From (\ref{IC-Wong-4}), (\ref{reduced-case2-IC}) and (\ref{reduced-case2-BC}), we know that $W_{zz}(0,0) = 0$ and $W(t,0) = 0$. These together with (\ref{EQ2-4}) and (\ref{one-z-derivative}) imply that
\begin{equation}
    \partial_{xz}u(t,0,0) = W_{zz}(t,0) = 0.
\end{equation}
By taking two derivatives with respect to $z$ of (\ref{solution-W-2}), we have
\begin{equation}\label{two-z-derivative}
     W_{tzzz} - W_{zz}^2 + W W_{zzzz} = 0.
\end{equation}
Since the particles on the line segment $L$ only move along this line, therefore, (\ref{two-z-derivative}) implies
\begin{equation}\label{two-z-derivative-2}
    \frac{d}{dt} W_{zzz}(t,Z(t)) = \frac{d}{dt} w_{zzz}(t,0,Z(t)) = W_{tzzz}(t,Z(t)) + WW_{zzzz}(t,Z(t)) = W_{zz}^2(t,Z(t)) \geq 0.
\end{equation}
Let $\mathcal{T}>0$ such that the solution $(u,v,w,p)$ of system (\ref{EQ2-1})--(\ref{EQ2-4}) remains smooth on $[0,\mathcal{T}]$. Then (\ref{two-z-derivative-2}) implies that as long as $W_{zzz}(0,Z(0)) >0$, we have $W_{zzz}(\mathcal{T},Z(\mathcal{T}))>0$. In order to show that $W_{zzz}(\mathcal{T},z^*) >0$ for each $z^*\in[0,1]$, we need to find $z_0\in[0,1]$ such that $Z(0) = z_0 $ and $Z(\mathcal{T}) = z^*$. For this purpose, we define
\begin{equation}\label{change-variable}
     \tau = \mathcal{T} - t, \;\;\ \tilde{Z}(\tau) = Z(t).
\end{equation}
Then, we have the following ordinary differential equation
\begin{equation}\label{ztilde}
    \frac{d\tilde{Z}(\tau)}{d\tau} = \frac{dZ(t)}{dt} \frac{dt}{d\tau} = -\frac{dZ(t)}{dt} = -W(t,Z(t)) = -W(\mathcal{T}-\tau, \tilde{Z}(\tau)),
\end{equation}
with initial condition
\begin{equation}
    \tilde{Z}(0) = Z(\mathcal{T}) = z^*.
\end{equation}
Since $W$ is smooth on $t\in[0,\mathcal{T}]$, we have a unique solution $\tilde{Z}(\tau)$ on $\tau\in[0,\mathcal{T}]$. Define $z_0 := \tilde{Z}(\mathcal{T})$, then we see from (\ref{change-variable}) that $Z(0) = \tilde{Z}(\mathcal{T}) = z_0 $ and $Z(\mathcal{T}) = \tilde{Z}(0) = z^*$. From (\ref{EQ2-4}) we know that $\partial_{xxz} u(t,0,z) = - W_{zzz}(t,z)$, therefore,
\begin{equation}
      \partial_{xzz}u(t,0,z) < 0, \; \text{ for all } z\in[0,1].
  \end{equation}
\end{proof}
We also need the following lemma. For details, see Lemma 2.5 in \cite{W12}.
\begin{lemma}\label{theorem-lemma-Wong}(see \cite{W12} Lemma 2.5)
Let $f:[0,1] \rightarrow \mathbb{R}$ be a $C^2$ function with the following properties:
\begin{enumerate}
    \item $f'(0) = 0$ and $f''(z)>0$ for any $z\in[0,1]$,
    \item $\int_0^1 f(z) dz = 0$.
\end{enumerate}
Then $f(1) > 0$ and
\begin{equation} \label{inequality}
    \int_0^1 f^2(z) dz \leq \frac{1}{3} f(1)^2.
\end{equation}
\end{lemma}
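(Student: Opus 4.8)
The plan is to handle the two assertions separately, since the sign of $f(1)$ is soft while the integral bound carries the real content. For $f(1)>0$: because $f''>0$ on $[0,1]$ and $f'(0)=0$, the fundamental theorem of calculus gives $f'(z)=\int_0^z f''(s)\,ds>0$ for $z\in(0,1]$, so $f$ is strictly increasing with its minimum at $z=0$. If $f(1)\le 0$ held, then $f(z)<f(1)\le 0$ for every $z\in[0,1)$, forcing $\int_0^1 f<0$ and contradicting hypothesis (2); hence $f(1)>0$ (and in fact $f(0)<0<f(1)$, giving a unique interior zero).

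For the quantitative estimate I would route everything through the nonnegative function $g:=f''\ge 0$, which is the only structural input available. Using $f'(0)=0$ twice and then fixing $f(0)$ from the constraint $\int_0^1 f=0$, one obtains the Green's-function-type representation
\[ f(z)=\int_0^1\kappa(z,s)\,g(s)\,ds,\qquad \kappa(z,s)=(z-s)_+-\tfrac12(1-s)^2, \]
together with $f(1)=\tfrac12\int_0^1(1-s^2)\,g(s)\,ds$. Substituting these into $f(1)^2-3\int_0^1 f^2\,dz$ and applying Fubini recasts the target as the nonnegativity of a symmetric quadratic form,
\[ f(1)^2-3\int_0^1 f^2\,dz=\int_0^1\!\!\int_0^1 g(s)\,g(t)\,Q(s,t)\,ds\,dt, \]
where $Q(s,t):=\tfrac14(1-s^2)(1-t^2)-3\int_0^1\kappa(z,s)\kappa(z,t)\,dz$. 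Since $g\ge 0$, it suffices to establish the pointwise bound $Q(s,t)\ge 0$ for all $s,t\in[0,1]$, a statement that no longer refers to $f$ at all.

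The step I expect to be the genuine obstacle is verifying $Q\ge 0$. First I would compute the inner integral explicitly: for $s\le t$ the product $(z-s)_+(z-t)_+$ is supported on $[t,1]$, so $\int_0^1\kappa(z,s)\kappa(z,t)\,dz=\int_t^1(z-s)(z-t)\,dz-\tfrac14(1-s)^2(1-t)^2$, a polynomial in $(s,t)$. Inserting this into $Q$ and substituting $a=1-s$, $b=1-t$ (so $1\ge a\ge b\ge 0$ on the relevant triangle), the expression collapses to $Q=b\,B(a,b)$ with $B(a,b)=a-\tfrac12 a^2-2ab+a^2b+\tfrac12 b^2$. Since $\partial_b B=a^2-2a+b\le a^2-a\le 0$ on $\{0\le b\le a\le 1\}$, the quantity $B$ is minimized over $b\in[0,a]$ at $b=a$, where $B(a,a)=a(1-a)^2\ge 0$; hence $B\ge 0$ and $Q=bB\ge 0$. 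By the symmetry of $Q$ this covers all $s,t\in[0,1]$, the double integral is nonnegative, and this is exactly $\int_0^1 f^2\le\tfrac13 f(1)^2$.

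As a consistency check guiding the constant, the bound is saturated in the limit by the linear profile $f(z)=c(z-\tfrac12)$, for which $\int_0^1 f^2=\tfrac{c^2}{12}=\tfrac13 f(1)^2$; this profile has zero mean but violates $f'(0)=0$, which is precisely why the strict hypotheses $f''>0$ and $f'(0)=0$ yield the stated (non-strict) inequality with the extremal never attained. An alternative, more hands-on route is to integrate by parts only once to write $f(1)^2-3\int_0^1 f^2=\int_0^1 g(s)\Psi(s)\,ds$ with multiplier $\Psi(s)=\tfrac12(1-s^2)f(1)-3\int_s^1(z-s)f(z)\,dz$ and prove $\Psi\ge 0$ directly; I prefer the kernel computation above because its positivity is unconditional in $f$, whereas $\Psi\ge 0$ must be argued back from the monotonicity and convexity of $f$.
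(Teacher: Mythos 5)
Your proof is correct. Note first that the paper does not actually prove this lemma: it defers entirely to Lemma~2.5 of \cite{W12}, so there is no in-paper argument to match. I checked your computation line by line: the representation $f(z)=\int_0^1\kappa(z,s)f''(s)\,ds$ with $\kappa(z,s)=(z-s)_+-\tfrac12(1-s)^2$ follows from Taylor expansion with $f'(0)=0$ plus the zero-mean constraint (which fixes $f(0)=-\tfrac12\int_0^1(1-s)^2f''(s)\,ds$); the cross terms in $\int_0^1\kappa(z,s)\kappa(z,t)\,dz$ do collapse to $-\tfrac14(1-s)^2(1-t)^2$ because $\int_0^1(z-s)_+\,dz=\tfrac12(1-s)^2$; and with $a=1-s\ge b=1-t$ the kernel reduces exactly to $Q=b\bigl(a-\tfrac12a^2-2ab+a^2b+\tfrac12b^2\bigr)$, whose nonnegativity follows from $\partial_bB=a^2-2a+b\le a^2-a\le 0$ and $B(a,a)=a(1-a)^2\ge0$ as you say. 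Your route is genuinely different from Wong's: his proof is an elementary convexity argument, locating the unique zero of the increasing convex $f$ and comparing $f$ pointwise with linear profiles to extract the constant $\tfrac13$, whereas you reduce the inequality to the unconditional pointwise positivity of a symmetric kernel, using only $f''\ge0$. Your version is more mechanical but self-contained and makes the sharpness transparent (the extremal linear profile sits on the boundary of the hypotheses); the price is the explicit polynomial bookkeeping for $Q$, which is where an error would most likely hide, and which I have verified is right.
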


Now let us return to the proof of Theorem \ref{theorem-Wong}.
\begin{proof} (Proof of Theorem \ref{theorem-Wong})
From the proof in Lemma \ref{theorem-lemma-Wong-invariant}, we know under the assumptions in Theorem \ref{theorem-Wong},
we have
\begin{eqnarray}\label{solution-W-3}
&&\hskip-.8in
W_{tz} - (W_z)^2 + WW_{zz} + 2\int_0^1 W_z^2(t,z) dz = 0.
\end{eqnarray}
From Lemma \ref{theorem-lemma-Wong-invariant}, we know $W_z(t,\cdot) = -\partial_{x} u(t,0,\cdot)$ satisfies both conditions in the Lemma \ref{theorem-lemma-Wong}. Therefore, we have $W_z(t,1) > 0$ and $\int_0^1 W_z(t,z)^2 dz \leq \frac{1}{3} W_z(t,1)^2$. Using this inequality in (\ref{solution-W-3}), and restrict at the point $z=1$, thanks to the boundary condition (\ref{reduced-case2-BC}), we have
\begin{equation}
    W_{zt}(t,1) =  W_z(t,1)^2 - 2 \int_0^1 W_z(t,z)^2 dz \geq \frac{1}{3} W_z(t,1)^2.
\end{equation}
Since $W_z(0,1) >0$, it follows that
\begin{equation}
    W_z(t,1) \geq \frac{3 W_z(0,1)}{3 - W_z(0,1)t}.
\end{equation}
Then we see $W_z(t,1)$ blows up before or at the time $\frac{3}{W_z(0,1)} = \frac{-3}{\partial_x u_0(0,1)}$.
Therefore, the solution $(u,v,w,p)$ must blow up at sometime $\mathcal{T}\in (0, \frac{-3}{\partial_x u_0(0,1)}]$.
\end{proof}
\begin{remark}\label{remark-blowup}
The requirements (\ref{IC-Wong-1})--(\ref{IC-Wong-4}) allow the initial condition to be real analytic. As an example, consider $u_0$ and $v_0$ to be:
\begin{equation}\label{initial-data-different-scale}
    u_0(x,z) = \lambda(-z^2+ \frac{1}{3}) \sin x,\;\;\;\; v_0(x,z) = -\Omega \sin x,
\end{equation}
with $\lambda >0$. For analytic initial data, system (\ref{EQ2-1})--(\ref{EQ2-4}) is local well-posed (cf. \cite{ILT20,KTVZ11}). Therefore, for arbitrary $\Omega\in\mathbb{R}$, we have initial data such that the solution of $3D$ inviscid PEs exists, and also blows up in finite time. For initial data $(u_0,v_0)$, notice that $\int_0^1 u_0(x,z) dz = 0$ and $v_0$ is independent of the $z$ variable. This implies that the baroclinic mode of the initial data is $(u_0, 0)$, and the barotropic mode of the initial data is $(0,v_0)$. We know from above that the guaranteed blowup time is
\begin{equation}
   \frac{-3}{\partial_x u_0(0,1)} = \frac{9}{2\lambda}.
\end{equation}
\end{remark}

\section{ill-posedness}
In this section, we first review the results in \cite{HN16,RE09} about the linear and nonliear ill-posedness of the $2D$ hydrostatic Euler equations in the absence of rotation. Consider $\Omega = 0$ and $v_0 = 0$ in system (\ref{EQ2-1})--(\ref{EQ2-4}), by the uniqueness of smooth solution one concludes that $v\equiv 0$. Therefore, in this case, system (\ref{EQ2-1})--(\ref{EQ2-4}) is reduced to
\begin{eqnarray}
&&\hskip-.8in u_t + uu_x + wu_z +
p_x = 0 , \label{2d-hydrostatic-euler-1}  \\
&&\hskip-.8in
p_z =0 ,   \label{2d-hydrostatic-euler-2}  \\
&&\hskip-.8in
u_x + w_z =0 ,  \label{2d-hydrostatic-euler-3}
\end{eqnarray}
which is the $2D$ hydrostatic Euler equations without rotation. We consider the problem in the horizontal channel $\big\{(x,z): 0\leq z\leq 1, x\in \mathbb{T}\big\}$, with initial condition $u_0$, subject to no-normal flow boundary condition on the top and bottom, and periodic in the $x$ variable:
\begin{equation}\label{BC-2d-hydrostatic-euler}
    \begin{split}
        &w(t,x,0) = w(t,x,1) = 0,\\
        &u, w, p \;\text{are periodic in } x \;\text{with period } 1.
    \end{split}
\end{equation}
Observe that
\begin{equation}
    (U, W)(t) = (U(z), 0), \;\;\; P=0 \label{Shear2}
\end{equation}
is a steady state solution of system (\ref{2d-hydrostatic-euler-1})--(\ref{2d-hydrostatic-euler-3}) with boundary condition (\ref{BC-2d-hydrostatic-euler}). Here $U$ is any shear flow depending only on the $z$ variable. Linearizing (\ref{2d-hydrostatic-euler-1})--(\ref{2d-hydrostatic-euler-3}) around (\ref{Shear2}), we have
\begin{eqnarray}
&&\hskip-.8in \tilde{u}_t + U\, \tilde{u}_x + \tilde{w} U' + \tilde{p}_x = 0 , \label{EQL-1}  \\
&&\hskip-.8in
\tilde{p}_z =0 ,   \label{EQL-2}  \\
&&\hskip-.8in
\tilde{u}_x + \tilde{w}_z =0 .  \label{EQL-3}
\end{eqnarray}
Thanks to (\ref{EQL-3}), we introduce the stream function $\psi$ such that
\begin{eqnarray}
&&\hskip-.8in
(\tilde{u},\tilde{w})=(\psi_z, -\psi_x). \label{stream}
\end{eqnarray}
Differentiating (\ref{EQL-1}) with respect to $z$, and thanks to (\ref{EQL-2}) and (\ref{stream}), we have
\begin{eqnarray}
&&\hskip-.8in \partial_t \psi_{zz} + U(z)\psi_{xzz} -\psi_{x} U''(z)  = 0 . \label{EQL-psi}
\end{eqnarray}
Observe that he no-normal flow boundary condition implies the associated boundary conditions
\begin{eqnarray}
&&\hskip-.8in
\psi(t,x,0) = \psi(t,x,1)=0.\label{LBC}
\end{eqnarray}
The ill-posedness of equation (\ref{EQL-psi}) for certain background shear flow $U(z)$ was established in \cite{RE09}. We summarize this result in the following theorem.
\begin{theorem}\label{theorem-Renardy}(see \cite{RE09})
	Equation (\ref{EQL-psi}) with the boundary conditions (\ref{LBC}) has solutions of the form
	\begin{eqnarray}
	&&\hskip-.8in
	\psi(x,z,t) = \chi(z) \exp(2\pi inx-inct)), \label{L1solution}
	\end{eqnarray}
	where $c$ solves the following equation
	\begin{eqnarray}
	&&\hskip-.8in
	\int_{0}^{1}\Big(U(z)-c\Big)^{-2} dz = 0, \label{eqnc}
	\end{eqnarray}
	and $\chi$ is given by
	\begin{eqnarray}
	&&\hskip-.8in
	\chi(z) = K(U(z)-c) \int_{0}^{z}(U(z)-c)^{-2} dz \label{chi}
	\end{eqnarray}
	for some constant $K$. Moreover, there is a smooth shear flow profile $U(z)$ such that there exist purely imaginary root $c=i\beta$ of (\ref{eqnc}), with $0\neq \beta \in \mathbb{R}$. This implies Hadamard instability, since the growth rate is $n\beta$ with $\beta$ is independent of $n$.
\end{theorem}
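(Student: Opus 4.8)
The plan is to treat the two assertions separately: first verify by direct substitution that the separated ansatz (\ref{L1solution}) reduces (\ref{EQL-psi}) to an explicitly solvable ODE, producing the profile (\ref{chi}) and the dispersion relation (\ref{eqnc}); then construct a shear flow for which (\ref{eqnc}) admits a purely imaginary root. For the first step, insert (\ref{L1solution}) into (\ref{EQL-psi}). On the factor $\exp(2\pi inx-inct)$ the operators $\partial_x$ and $\partial_t$ act as multiplication by constants proportional to $n$ and $-nc$ respectively, so dividing through by the (nonvanishing) common exponential and the overall frequency factor collapses the PDE to the linear second-order ODE
\begin{equation}
(U(z)-c)\,\chi''(z) - U''(z)\,\chi(z) = 0, \qquad z\in(0,1), \label{prop-ode}
\end{equation}
subject to $\chi(0)=\chi(1)=0$ coming from (\ref{LBC}). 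This is the hydrostatic analogue of Rayleigh's equation, with the usual $k^2$ term absent.

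The second step is to solve \eqref{prop-ode} and read off (\ref{eqnc}). The key observation is that one solution is visible by inspection, namely $\chi_1(z)=U(z)-c$, since $\chi_1''=U''$ and hence $(U-c)U''-U''(U-c)\equiv 0$. Provided $c\notin U([0,1])$, so that $(U-c)^{-2}$ is integrable, reduction of order (set $\chi=\chi_1\eta$ and integrate $\eta'=\chi_1^{-2}$) produces the second, linearly independent solution
\begin{equation}
\chi(z) = K\,(U(z)-c)\int_0^z (U(s)-c)^{-2}\,ds, \label{prop-chi2}
\end{equation}
with $K$ a free constant, which is exactly (\ref{chi}). Now $\chi(0)=0$ automatically, so the boundary condition at $z=0$ imposes nothing, whereas $\chi(1)=0$, together with $K\neq 0$ and $U(1)\neq c$, forces $\int_0^1 (U-c)^{-2}\,dz=0$, i.e. (\ref{eqnc}). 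This settles the first part of the theorem.

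The third step, finding an unstable profile, is the crux. We seek a root $c=i\beta$ with $0\neq\beta\in\R$. Splitting (\ref{eqnc}) through the identity $(U-i\beta)^{-2}=(U^2-\beta^2+2i\beta U)(U^2+\beta^2)^{-2}$ into real and imaginary parts yields the pair of conditions
\begin{equation}
\int_0^1 \frac{U^2-\beta^2}{(U^2+\beta^2)^2}\,dz = 0, \qquad \int_0^1 \frac{U}{(U^2+\beta^2)^2}\,dz = 0. \label{prop-reim}
\end{equation}
My strategy is to impose that $U$ be odd about the midpoint, $U(1-z)=-U(z)$; then the integrand of the second equation in \eqref{prop-reim} is odd about $z=1/2$ and that condition holds for free, leaving a single scalar equation. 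Setting $G(\beta):=\int_0^1 \beta\,(U^2+\beta^2)^{-1}\,dz$ and observing that $G'(\beta)$ equals the first integral in \eqref{prop-reim}, it suffices to exhibit a smooth odd profile for which $G$ has an interior critical point on $(0,\infty)$. For any bounded $U$ one has $G'(\beta)<0$ once $\beta>\|U\|_{\infty}$; on the other hand, taking $U$ close to a two-layer ``vortex-sheet'' profile — a smoothed step joining $-M$ on $(0,1/2)$ to $M$ on $(1/2,1)$ across a thin transition of width $2\delta$ — a direct computation gives $G'(0^+)=(1-2\delta)M^{-2}+o(1)>0$. Hence $G'$ changes sign and the intermediate value theorem supplies a root $\beta^\ast\in(0,\|U\|_{\infty})$, with $c=i\beta^\ast\notin U([0,1])\subset\R$ so that the formulas above remain valid.

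I expect this last step to be the main obstacle, precisely because one must control the regularized contribution of the integrand near the simple zero of $U$ in order to pin down the sign of $G'(0^+)$ and to keep the root strictly off the real axis; this delicate construction is the content of Renardy \cite{RE09}, which we invoke. Finally, for such $c=i\beta^\ast$ the temporal factor in (\ref{L1solution}) is $\exp(n\beta^\ast t)$, and since $U$ is real both $\pm i\beta^\ast$ solve (\ref{eqnc}); choosing the sign of $\beta$ according to that of $n$ produces solutions growing like $e^{|n|\beta^\ast t}$, i.e. with growth rate $|n|\beta^\ast$ linear in the frequency and $\beta^\ast$ independent of $n$. This is exactly the claimed Hadamard (Kelvin--Helmholtz type) instability.
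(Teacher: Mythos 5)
The paper does not prove this theorem: it is stated as a quotation of Renardy's result, with ``(see \cite{RE09})'' and no proof environment, and the subsequent remark merely records that the profile can be taken to be $U(z)=\tanh(\frac{z-1/2}{d})$ for small $d$. So there is no in-paper argument to compare against; what you have written is a reconstruction of the cited proof, and it is essentially the correct one. The separation of variables, the observation that $U-c$ solves the degenerate Rayleigh equation $(U-c)\chi''-U''\chi=0$, the reduction of order producing (\ref{chi}), and the derivation of (\ref{eqnc}) from $\chi(1)=0$ are all right; note only that the boundary condition at $z=0$ is not vacuous for the \emph{general} solution $A(U-c)+B\chi_2$ --- it is what forces $A=0$, since $U(0)-c\neq 0$ for non-real $c$ --- and that the paper's ansatz $\exp(2\pi inx-inct)$ carries a $2\pi$ mismatch between the $x$- and $t$-frequencies (and an unbalanced parenthesis), so strictly the wave speed in your ODE is $c/2\pi$; this is a harmless rescaling consistent with the stated formulas. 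Your construction of the purely imaginary root via an antisymmetric profile (killing the imaginary part of (\ref{eqnc}) by oddness about $z=1/2$) and the intermediate value theorem applied to $G'(\beta)=\int_0^1\frac{U^2-\beta^2}{(U^2+\beta^2)^2}\,dz$ is the standard route and matches the $\tanh$ profile the paper points to. One quantitative caveat: in the limit $\beta\to 0^+$ the neighborhood of the simple zero of $U$ contributes a \emph{negative} quantity of order $d/M^2$ (one sees this from the exact antiderivative $-s/(a^2s^2+\beta^2)$ for the linearized profile), not a positive $o(1)$ term; the conclusion $G'(0^+)>0$ still holds for small transition width $d$, but the sign bookkeeping there is exactly the delicate point, and your explicit deferral of its rigorous treatment to \cite{RE09} is in line with what the paper itself does.
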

\begin{remark}
From Theorem \ref{theorem-Renardy}, we see equation (\ref{EQL-psi}) with the boundary conditions (\ref{LBC}) has solutions of the form $\psi(x,z,t) = e^{2\pi inx} e^{n\beta t}\chi(z)$ with $0 \neq \beta \in \mathbb{R}$. Such Kelvin-Helmholtz type instability, which also appears in the context of vortex sheets, implies the linear ill-posedness of the inviscid PEs in Sobolev spaces and in Gevrey class of order $s>1$. This suggests that the suitable space for the well-posedness of the inviscid PEs is Gevrey class of order $s= 1$, which is exactly the space of analytic functions. This is consistent with results in \cite{ILT20,KTVZ11}.
\end{remark}
In \cite{HN16}, the authors consider the nonlinear perturbation of system (\ref{2d-hydrostatic-euler-1})--(\ref{2d-hydrostatic-euler-3}) around (\ref{Shear2})
\begin{eqnarray}
&&\hskip-.8in \tilde{u}_t + \tilde{u} \tilde{u}_x + U\, \tilde{u}_x + \tilde{w}\tilde{u}_z + \tilde{w} U' + \tilde{p}_x = 0 , \label{EQNL-1}  \\
&&\hskip-.8in
\tilde{p}_z =0 ,   \label{EQNL-2}  \\
&&\hskip-.8in
\tilde{u}_x + \tilde{w}_z =0 .  \label{EQNL-3}
\end{eqnarray}
Defining the vorticity $\tilde{\omega}:= \tilde{u}_z$, and differentiating (\ref{EQNL-1}) with respect to $z$, thanks to (\ref{EQNL-2}) and (\ref{EQNL-3}), one obtains
\begin{equation}\label{equation-vorticity}
    \tilde{\omega}_t + \tilde{u} \tilde{\omega}_x + U\tilde{\omega}_x  + \tilde{w}\tilde{\omega}_z + \tilde{w}U'' = 0,
\end{equation}
in which $\tilde{u}:=\psi_z$ and $\tilde{w}:=-\psi_x$, where $\psi$ is the stream function solves
\begin{equation}\label{streamfun}
    \psi_{zz} = \tilde{\omega}, \;\;\; \psi|_{z=0,1} = 0.
\end{equation}
The following theorem from \cite{HN16} establishes the nonlinear ill-posedness of (\ref{equation-vorticity})--(\ref{streamfun}) in any Sobolev space.
\begin{theorem}\label{theorem-HN}(see \cite{HN16}) There exists a
stationary background shear flow $U(z)$ such that the following holds. For all $s\in\mathbb{N}$, $\alpha \in (0,1]$, and $k\in\mathbb{N}$, there are families of solutions $(\tilde{\omega}_\epsilon)_{\epsilon>0}$ of (\ref{equation-vorticity})--(\ref{streamfun}), and corresponding times $t_\epsilon = \mathcal{O}(\epsilon|\log\epsilon|)$, and $(x_0,z_0)\in \mathbb{T}\times (0,1)$ such that
\begin{equation}
    \lim\limits_{\epsilon\rightarrow 0}\frac{\|\tilde{\omega}_\epsilon\|_{L^2([0,t_\epsilon]\times \Omega_\epsilon)}}{\|\tilde{\omega}_\epsilon|_{t=0}\|^\alpha_{H^s(\mathbb{T}\times(0,1))}} = + \infty,
\end{equation}
where $\Omega_\epsilon = B(x_0,\epsilon^k)\times B(z_0,\epsilon^k)$.
\end{theorem}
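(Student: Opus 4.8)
The plan is to convert the linear Kelvin--Helmholtz instability of Theorem~\ref{theorem-Renardy} into nonlinear ill-posedness by a high-frequency nonlinear-optics (Grenier-type) construction. The engine is the unbounded growth rate: for the special profile $U$ of Theorem~\ref{theorem-Renardy} there is a purely imaginary root $c=i\beta$, $\beta\neq0$, so the linearized problem (\ref{EQL-psi})--(\ref{LBC}) admits modes $\psi=\chi(z)e^{2\pi i n x+n\beta t}$ whose growth rate $n\beta$ is \emph{proportional to} the frequency $n$ and is unbounded as $n\to\infty$. No continuity estimate from $H^s$ into $L^2$ can survive such amplification, and the proof makes this quantitative by steering a genuine nonlinear solution of (\ref{equation-vorticity})--(\ref{streamfun}) along this growing mode up to a short escape time.

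First I would fix the leading profile. Setting $c=i\beta$ in (\ref{L1solution})--(\ref{chi}) and passing to the vorticity via $\tilde\omega=\psi_{zz}$, let $\omega_1(t,x,z):=\Re\big(\chi''(z)\,e^{2\pi i n x+n\beta t}\big)$, which solves the linearization of (\ref{equation-vorticity})--(\ref{streamfun}); note that recovering $\psi$ from $\tilde\omega=\psi_{zz}$ gains two $z$-derivatives, while $\tilde w=-\psi_x$ costs one $x$-derivative, so this mode grows exactly like $e^{n\beta t}$. I would then build an approximate solution by the ansatz
\begin{equation}
\omega^{app}_\epsilon=\sum_{j=1}^{N}\delta^{\,j}\,\omega_j(t,x,z),
\end{equation}
where $\delta=\delta(\epsilon)$ is a small amplitude and, for $2\le j\le N$, each corrector $\omega_j$ solves an inhomogeneous linearized equation $\partial_t\omega_j+L\omega_j=-\sum_{p+q=j}\big(\tilde u_p\,\partial_x\omega_q+\tilde w_p\,\partial_z\omega_q\big)$, with $L$ the linearized operator and $\tilde u_p,\tilde w_p$ reconstructed from $\omega_p$ through (\ref{streamfun}). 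Since a $j$-fold product of factors growing like $e^{n\beta t}$ grows like $e^{jn\beta t}$, one obtains $\|\omega_j(t)\|\lesssim e^{jn\beta t}$ in the working norm, and the expansion is consistent while $\delta e^{n\beta t}$ stays bounded.

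Next comes the remainder estimate. Writing the exact solution as $\tilde\omega_\epsilon=\omega^{app}_\epsilon+\omega^R_\epsilon$ with matching initial data, $\omega^R_\epsilon$ obeys a forced transport equation whose forcing is the $O(\delta^{N+1}e^{(N+1)n\beta t})$ residual of the truncation plus a quadratic self-interaction. An $L^2$-type energy estimate on $\omega^R_\epsilon$ --- recovering $\tilde u,\tilde w$ from (\ref{streamfun}), using the boundary conditions (\ref{LBC}) and Poincar\'e/Sobolev inequalities in $z$ --- closes a Gr\"onwall argument keeping $\omega^R_\epsilon$ strictly below the leading term $\delta\,\omega_1$ up to the escape time
\begin{equation}
t_\epsilon:=\frac{1}{n\beta}\log\frac{\theta_0}{\delta},
\end{equation}
defined by $\delta\,e^{n\beta t_\epsilon}=\theta_0$ for a fixed small $\theta_0$. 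Choosing $n=n(\epsilon)\sim\epsilon^{-1}$ and $\delta=\epsilon^{b}$ gives $t_\epsilon=\mathcal{O}(\epsilon|\log\epsilon|)$ as required. Finally I would evaluate the quotient: the datum satisfies $\|\tilde\omega_\epsilon|_{t=0}\|_{H^s}\sim\delta\,n^{s}$, while at times near $t_\epsilon$ the solution has amplitude $\sim\theta_0$ at frequency $n$, so localizing on $\Omega_\epsilon=B(x_0,\epsilon^{k})\times B(z_0,\epsilon^{k})$ centred where $\chi''\neq0$ and integrating in $t$ yields a numerator of order $\theta_0(n\beta)^{-1/2}\epsilon^{k}$; taking $b>s+(k+\tfrac12)/\alpha$ then forces the ratio to diverge for the prescribed $s,\alpha,k$.

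The hard part will be the remainder estimate. Because $n\beta$ is unbounded in $n$, the linearized semigroup is bounded on no fixed Sobolev space, so ordinary perturbation theory fails and one must both carry the expansion to a sufficiently high order $N=N(s,\alpha,k)$ and estimate every quadratic interaction with frequency-localized bounds that track the exact exponential rates. The delicate point is that $\partial_x$ applied to the $O(\theta_0)$, frequency-$n$ approximate solution produces factors of $n$ in the Gr\"onwall constant of the $\omega^R_\epsilon$ estimate; one must exploit the structure of the nonlinearity and the gain of $z$-regularity in $\psi_{zz}=\tilde\omega$ to prevent this constant from exceeding the admissible rate $\sim n\beta$, so that the remainder does not overtake the leading mode before $t_\epsilon$.
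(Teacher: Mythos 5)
First, note that the paper does not prove Theorem \ref{theorem-HN} at all: it is quoted verbatim from Han-Kwan and Nguyen \cite{HN16}, so there is no in-paper argument to compare yours against. Your sketch is a reconstruction of the strategy actually used in \cite{HN16} (itself in the tradition of Grenier and of G\'erard-Varet--Dormy): seed the nonlinear problem with a small multiple of the unstable eigenmode of Theorem \ref{theorem-Renardy}, ride the linear growth $e^{n\beta t}$ up to the escape time $t_\epsilon\sim (n\beta)^{-1}\log(1/\delta)$, and check that the localized $L^2$ norm then beats any H\"older power of the $H^s$ norm of the data. The final arithmetic ($\|\tilde\omega_\epsilon|_{t=0}\|_{H^s}\sim\delta n^s$, numerator $\gtrsim \theta_0\,\epsilon^{k}(n\beta)^{-1/2}$, condition $b>s+(k+\tfrac12)/\alpha$) is consistent.

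The genuine gap is in the step you yourself flag as ``the hard part'', and it is not merely technical. To close the bootstrap for the remainder $\omega^R_\epsilon$ (and to solve the corrector equations by Duhamel) you need an \emph{upper} bound on the full linearized semigroup at frequency $n$ of the form $\|e^{tL_n}\|\lesssim e^{(\beta+\eta)nt}$ with $\eta$ small; that is, you must know that the single unstable eigenvalue $c=i\beta$ produced by Theorem \ref{theorem-Renardy} actually governs the growth of the whole (degenerate, nonlocal, non-self-adjoint, non-sectorial) linearized hydrostatic operator, not just of one mode. Theorem \ref{theorem-Renardy} gives only a \emph{lower} bound on the growth; your Gr\"onwall argument silently invokes the upper bound, and establishing it is the technical core of \cite{HN16}. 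Without it, the remainder could in principle be amplified at a rate exceeding $n\beta$ and overtake the leading term before $t_\epsilon$. A second, smaller omission: since the problem is ill-posed in Sobolev spaces, the existence of the exact solutions $\tilde\omega_\epsilon$ on $[0,t_\epsilon]$ is not free. One must take the data analytic (it is, being a single Fourier mode in $x$) and verify via the analytic local theory of \cite{KTVZ11} that the lifespan, which scales roughly like $\epsilon/\delta$ for data of amplitude $\delta$ at frequency $n\sim\epsilon^{-1}$, indeed exceeds $t_\epsilon=\mathcal{O}(\epsilon|\log\epsilon|)$; this works only because $\delta=\epsilon^{b}$ is small, and should be stated.
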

\begin{remark}
The shear flow $U(z)$ indicated in Theorem \ref{theorem-HN} is the same as the one indicated in Theorem \ref{theorem-Renardy}, and it can be chosen to be analytic. For instance, $U(z) = \tanh{(\frac{z-1/2}{d})}$ for small $d$ as indicated in \cite{CM91}.
\end{remark}
Now, we extend Theorem \ref{theorem-Renardy} and Theorem \ref{theorem-HN} to the case when $\Omega \neq 0$ for system (\ref{EQ2-1})--(\ref{EQ2-4}). We consider system (\ref{EQ2-1})--(\ref{EQ2-4}) in the horizontal channel $\big\{(x,z): 0\leq z\leq 1, x\in \mathbb{R}\big\}$. Observe that in this situation, we can consider the steady state background flow
\begin{equation}\label{background-geostrophic}
    (U,V,W, P) = (U(z), -\Omega x, 0, -\frac{1}{2}\Omega^2 x^2)
\end{equation}
for system (\ref{EQ2-1})--(\ref{EQ2-4}). Here the $x$-direction component $U$ is the shear flow indicated in Theorem \ref{theorem-Renardy} and Theorem \ref{theorem-HN}, the $y$-direction component $V$ is a Couette shear flow, depending on $\Omega$, in the $x$ variable. Observe that this background flow has infinite energy. However, we will consider next the perturbation around this steady state background flow for system (\ref{EQ2-1})--(\ref{EQ2-4}) and obtain
\begin{eqnarray}
&&\hskip-.8in
\tilde{u}_t + \tilde{u} \tilde{u}_x + U\tilde{u}_x + \tilde{w} \tilde{u}_z + \tilde{w} U' + \tilde{p}_x - \Omega \tilde{v} = 0 , \label{perturbation-1} \\
&&\hskip-.8in
\tilde{v}_t + \tilde{u} \tilde{v}_x + U\tilde{v}_x + \tilde{w} \tilde{v}_z = 0, \label{perturbation-2} \\
&&\hskip-.8in
\tilde{p}_z =0 ,   \label{perturbation-3}  \\
&&\hskip-.8in
\tilde{u}_x + \tilde{w}_z =0, \label{perturbation-4}
\end{eqnarray}
with periodic boundary conditions
\begin{equation}\label{BC-perturbation}
    \begin{split}
        &\tilde{w}(t,x,0) = \tilde{w}(t,x,1) = 0, \\
        &\tilde{u}, \tilde{v}, \tilde{w}, \tilde{p} \;\text{are periodic in } x \;\text{with period } 1.
    \end{split}
\end{equation}
In order to show the ill-posedness of system (\ref{perturbation-1})--(\ref{BC-perturbation}) in Sobolev spaces, we assume by contradiction that it is well-posed. Then by uniqueness we see that if $\tilde{v}_0 = 0$, then $\tilde{v}\equiv 0$. Therefore, system (\ref{perturbation-1})--(\ref{perturbation-4}) reduces to system (\ref{EQNL-1})--(\ref{EQNL-3}). If we only consider linear terms, system (\ref{perturbation-1})--(\ref{perturbation-4}) reduces to system (\ref{EQL-1})--(\ref{EQL-3}). It follows directly from Theorem \ref{theorem-Renardy} and Theorem \ref{theorem-HN} that the perturbed system (\ref{perturbation-1})--(\ref{BC-perturbation}) is both linearly and nonlinearly ill-posed in any Sobolev space, and is linearly ill-posed in Gevrey class of order $s>1$. To be more specific, we have:
\begin{theorem}\label{theorem-illposedness}
    The inviscid PEs (\ref{EQ2-1})--(\ref{EQ2-4}) with boundary condition (\ref{BC-1}) is both linearly and nonlinearly ill-posed in Sobolev spaces, and is linearly ill-posed in Gevrey class of order $s>1$, in the sense that the perturbed system (\ref{perturbation-1})--(\ref{BC-perturbation}) around the certain steady state background flow (\ref{background-geostrophic}) is both linear and nonlinearly ill-posed in Sobolev spaces, and is linearly ill-posed in Gevrey class of order $s>1$.
\end{theorem}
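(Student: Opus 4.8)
The plan is to reduce the rotating perturbed system (\ref{perturbation-1})--(\ref{BC-perturbation}) to the non-rotating systems already handled in Theorem \ref{theorem-Renardy} and Theorem \ref{theorem-HN}, and then to argue by contradiction. First I would assume, toward a contradiction, that the perturbed system (\ref{perturbation-1})--(\ref{BC-perturbation}) is well-posed (in the linear or the nonlinear sense, as appropriate) in the relevant Sobolev or Gevrey space; in particular this assumption furnishes uniqueness of solutions, which is the only ingredient I will need from it.

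The crucial observation --- and the only place where the specific choice of background flow (\ref{background-geostrophic}) is used --- is that the rotation parameter $\Omega$ enters the perturbation equations in exactly two places: the term $-\Omega\tilde{v}$ in (\ref{perturbation-1}), and implicitly through the Couette profile $V=-\Omega x$ absorbed into the background. Because $V_x=-\Omega$ is constant and $V_z=0$, the geostrophic balance $-\Omega V+P_x=0$ cancels the would-be forcing, and the resulting $\tilde{v}$-equation (\ref{perturbation-2}) is a \emph{sourceless} transport equation $\tilde{v}_t+(U+\tilde{u})\tilde{v}_x+\tilde{w}\tilde{v}_z=0$. Hence, restricting to initial data with $\tilde{v}_0=0$, the function $\tilde{v}\equiv 0$ is a solution, and by the uniqueness granted by the well-posedness assumption it is the only one. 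I would record this reduction for both the full nonlinear system and its linearization, whose $\tilde{v}$-equation is $\tilde{v}_t+U\tilde{v}_x=0$, again sourceless.

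With $\tilde{v}\equiv 0$ the term $-\Omega\tilde{v}$ disappears from (\ref{perturbation-1}), so the rotating nonlinear system (\ref{perturbation-1})--(\ref{perturbation-4}) collapses verbatim onto the non-rotating system (\ref{EQNL-1})--(\ref{EQNL-3}), equivalently the vorticity formulation (\ref{equation-vorticity})--(\ref{streamfun}); dropping the quadratic terms likewise collapses the linearized system onto (\ref{EQL-1})--(\ref{EQL-3}). At this point I would simply invoke the known results: Theorem \ref{theorem-HN} produces, for the background shear $U$, families of solutions whose norm-inflation ratio blows up, and these are now \emph{also} solutions of the rotating system with $\tilde{v}_0=0$, contradicting nonlinear well-posedness in Sobolev spaces; Theorem \ref{theorem-Renardy} produces the Kelvin--Helmholtz modes $\psi=\chi(z)e^{2\pi inx+n\beta t}$ with $\beta$ independent of $n$, whose growth rate $n\beta$ defeats well-posedness in every Sobolev space and in Gevrey class of order $s>1$.

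I do not expect a serious analytic obstacle, since the heavy lifting is already carried out in the cited theorems; the content of the argument is entirely the algebraic reduction, so I would also verify once, directly from (\ref{EQ2-1})--(\ref{EQ2-4}), that (\ref{background-geostrophic}) is a genuine steady state and that the perturbation equations are indeed (\ref{perturbation-1})--(\ref{perturbation-4}) as stated. The one point that must be phrased carefully is the logical role of uniqueness: the implication $\tilde{v}_0=0\Rightarrow\tilde{v}\equiv 0$ is not unconditional but is supplied precisely by the well-posedness being assumed for contradiction, which is exactly what licenses transplanting the non-rotating instabilities into the rotating problem.
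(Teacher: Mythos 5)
Your proposal is correct and follows essentially the same route as the paper: assume well-posedness for contradiction, use the resulting uniqueness to conclude $\tilde{v}\equiv 0$ from $\tilde{v}_0=0$, observe that the rotating perturbed system then collapses onto the non-rotating systems (\ref{EQNL-1})--(\ref{EQNL-3}) and (\ref{EQL-1})--(\ref{EQL-3}), and invoke Theorem \ref{theorem-Renardy} and Theorem \ref{theorem-HN}. Your additional remarks on the geostrophic balance and the sourceless transport structure of (\ref{perturbation-2}) only make explicit what the paper leaves implicit.
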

\begin{remark}
Although the perturbed system (\ref{perturbation-1})--(\ref{perturbation-4}) is linearly ill-posed in Sobolev spaces and Gevrey class of order $s>1$,  it can be shown that it is locally well-posed in the space of analytic functions since $U(z)$ is analytic, following \cite{ILT20,KTVZ11}.
\end{remark}

\section{Conclusion}
From the results in section 2 and 3, we see that the $3D$ inviscid PEs, with arbitrary $\Omega\in\mathbb{R}$ and with initial data satisfying (\ref{2d-condition}), can form singularities in finite time, and is both linearly and nonlinearly ill-posed in Sobolev spaces and linearly ill-posed in Gevrey class of order $s>1$. From the Kelvin-Helmholtz type of instability of the inviscid PEs, it is natural to consider the question of well-posedness of the inviscid PEs (with or without rotation) in Gevrey class of order $s=1$, i.e., the space of analytic functions (cf. \cite{ILT20} and \cite{KTVZ11}). Moreover, there is no hope to show the global well-posedness of the $3D$ inviscid PEs, even with fast rotation. The optimal result one can expect is that fast rotation prolongs the life-span of the $3D$ inviscid PEs (cf. \cite{ILT20}).

Both approaches in section 2 restrict the system to the line $x=0$, and use the oddness of the solution to simplify the system. This has the same spirit in \cite{EE97}, where the author established the blowup of solutions to Prandtl equation.

By virtue of Remark \ref{remark-blowup}, we know that the guaranteed blowup time of the solution is $\mathcal{T} = \frac{9}{2\lambda}$. For initial data $(u_0, v_0)$ defined in (\ref{initial-data-different-scale}), $(u_0,0)$ and $(0,v_0)$ correspond to the baroclinic and barotropic mode, respectively. When $|\Omega|\gg 1$, we have:
\begin{itemize}
    \item when $\lambda = |\Omega|$, the baroclinic mode satisfies $(u_0,0) \sim |\Omega|$, and the whole initial data satisfies $(u_0,v_0) \sim |\Omega|$. The guaranteed blowup time in this case satisfies $\mathcal{T}\sim \frac{1}{|\Omega|}$;
    \item when $\lambda = 1$, the baroclinic mode satisfies $(u_0,0) \sim 1$, while the whole initial data satisfies $(u_0,v_0) \sim |\Omega|$. The guaranteed blowup time in this case satisfies $\mathcal{T}\sim 1$;
    \item when $\lambda = \frac{1}{|\Omega|}$, this implies a smallness condition on the baroclinic $(u_0,0) \sim \frac{1}{|\Omega|}$, while the whole initial data satisfies $(u_0,v_0) \sim |\Omega|$. The guaranteed blowup time in this case satisfies $\mathcal{T}\sim |\Omega|$.
\end{itemize}

Based on the observations above, one can expect that the lower bound of the life-span of the $3D$ inviscid PEs in the space of analytic functions can be prolonged with fast rotation, and with some smallness conditions on the size of the baroclinic mode. This result will be reported in \cite{ILT20}.

It remains interesting to know whether for arbitrary $\Omega$ there exists a blowup solution with initial data $(u_0,v_0)$ whose barotropic and baroclinic modes are both of order $1$. Moreover, to estimate the corresponding blowup time $\mathcal{T}$ as $|\Omega|\rightarrow \infty$. Observe that if the blowup time $\mathcal{T} \sim 1$ as $|\Omega|\rightarrow \infty$, this would imply that fast rotation does not prolong the life-span of the solution to the $3D$ inviscid PEs unless, as it has been noted above, a smallness condition on the size of the baroclinic mode is met.

\section*{Acknowledgments}
The work of E.S.T.\ was supported in part by the Einstein Stiftung/Foundation - Berlin, through the Einstein Visiting Fellow Program. The work of S.I was supported by NSERC grant (371637-2019).


\begin{thebibliography}{99}

\bibitem{AS73} M. Abramowitz and I.A. Stegun, {\em Handbook of mathematical functions,} Dover, reprint (1973).

\bibitem{AG01} P. Az\'erad and F. Guill\'en, {\em Mathematical justification of the hydrostatic approximation in the primitive equations of geophysical fluid dynamics,}
	SIAM J. Math.
Anal., {\bf 33 }(2001), 847--859.

\bibitem{BIT11} A. Babin, A.A. Ilyin, and E.S. Titi, {\em On the regularization mechanism for the spatially periodic Korteweg–de
		Vries equation,}
	Commun. Pure Appl. Math. {\bf 64} (2011), 591--648 .

\bibitem{BMN97} A. Babin, A. Mahalov, and B. Nicolaenko, {\em Regularity and integrability of 3D Euler and Navier–Stokes
equations for rotating fluids,}
	Asymptot. Anal. {\bf 15:2} (1997), 103--150.
	
\bibitem{BMN99a} A. Babin, A. Mahalov, and B. Nicolaenko, {\em Global regularity of 3D rotating Navier–Stokes equations for resonant domains,}
	Indiana Univ. Math. J. {\bf 48:3}  (1999), 1133--1176.
	
\bibitem{BMN99b} A. Babin, A. Mahalov, and B. Nicolaenko, {\em On the regularity of three-dimensional rotating Euler-Boussinesq equations,}
	Mathematical Models and Methods in Applied Sciences. {\bf 9:7} (1999),  1089--1121.
	
\bibitem{BMN00} A. Babin, A. Mahalov, and B. Nicolaenko, {\em Fast singular oscillating limits and global regularity for the 3D primitive
equations of geophysics,}
	Math. Model. Numer. Anal. {\bf 34} (1999), 201--222.

\bibitem{BLNNT13} C. Bardos, M. C. Lopes Filho, Dongjuan Niu, H. J. Nussenzveig Lopes, and E. S. Titi, {\em Stability of two-dimensional viscous incompressible flows under three-dimensional perturbations and inviscid symmetry breaking,}
SIAM J. Math. Anal., {\bf 45:3} (2013), 1871–-1885.

\bibitem{BT07} C. Bardos and E. S. Titi, {\em Euler equations of incompressible ideal fluids,} Russian Mathematical Surveys, {\bf 62}(3) (2007), 409--451.

\bibitem{BR99} Y. Brenier, {\em Homogeneous hydrostatic flows with convex velocity profiles,} Nonlinearity,
{\bf 12:3} (1999), 495--512.

\bibitem{BR03} Y. Brenier, {\em Remarks on the derivation of the hydrostatic Euler equations,} Bull. Sci. Math.,
{\bf 127:7} (2003), 585--595.

\bibitem{CO89} R. E. Caﬂisch and O. F. Orellana, {\em
	Singular solutions and ill-posedness for the evolution of vortex sheets,} SIAM J. Math. Anal., {\bf 20:2} (1989),  293--307.


\bibitem{CINT15} C. Cao, S. Ibrahim, K. Nakanishi and E. S. Titi, {\em
	Finite-time blowup for the
	inviscid primitive equations of oceanic and atmospheric dynamics,} Comm. Math.
Phys., {\bf 337} (2015), 473--482.

\bibitem{CLT16} C. Cao, J. Li and E. S. Titi, {\em
	Global well-posedness of the 3D primitive equations
	with only horizontal viscosity and diffusivity,} Comm. Pure Appl. Math.,
{\bf 69} (2016), 1492--1531.

\bibitem{CLT17} C. Cao, J. Li and E. S. Titi, {\em
	Strong solutions to the 3D primitive equations
	with only horizontal dissipation: Near $H^1$
	initial data,} J. Funct. Anal. {\bf 272:11} (2017),  4606--4641.

\bibitem{CLT17b} C. Cao, J. Li and E. S. Titi, {\em
	Global well-posedness of the 3D primitive equations with horizontal viscosity and
	vertical diffusivity,} Physica D {\bf 412} (2020). https://doi.org/10.1016/j.physd.2020.132606.
	
\bibitem{CLT19} C. Cao, Q. Lin and E. S. Titi, {\em
	On the well--posedness of reduced $3D$ primitive geostrophic adjustment model with weak dissipation,} J. Math. Fluid Mech. (2020). https://doi.org/10.1007/s00021-020-00495-6.

\bibitem{CT07} C. Cao and E. S. Titi, {\em
	Global well-posedness of the three-dimensional viscous primitive
	equations of large scale ocean and atmosphere dynamics,} Ann. of Math.,
{\bf 166} (2007), 245--267.

\bibitem{CT10} C. Cao and E. S. Titi, {\em
	Regularity ``in large" for the 3D Salmon's planetary geostrophic model of ocean dynamics,} Mathematics of Climate and Weather Forecasting, {\bf 6:1} (2020), 1--15.


\bibitem{CDGG06} J.-Y. Chemin, B. Desjardines, I. Gallagher and
E. Grenier, {\em Mathematical Geophysics. An introduction to rotating fluids
and the Navier-Stokes Equations,}
Oxford lecture series in Mathematics and Its Applications, Vol. 32. Clarendon Press,
Oxford, 2006.

\bibitem{CM91} X. L. Chen and P. J. Morrison, {\em A sufficient condition for the ideal instability of shear flow with parallel magnetic field,}
Phys. Fluids B Plasma Phys. (1989–1993) {\bf 3:4} (1991),
863--865.


\bibitem{CISY89}  S. Childress, G.R. Ierley, E.A. Spiegel and W.R. Young, {\em Blow-up of unsteady two-dimensional Euler and Navier-Stokes
solutions having stagnation-point form,} J. Fluid Mech. {\bf 203} (1989), 1--22.

\bibitem{DG19} H. Dietert and D. G\'erard-Varet, {\em Well-posedness of the Prandtl equations without
any structural assumption,} Ann. PDE {\bf 5:8} (2019). https://doi.org/10.1007/s40818-019-0063-6

\bibitem{D05}  A. Dutrifoy, {\em Examples of dispersive effects in non-viscous rotating fluids}, J. Math. Pures Appl. {\bf 84:9} (2005), 331--356.

\bibitem{EE97} W. E and B. Enquist, {\em Blow up of solutions to the unsteady Prandtl equation,} Comm. Pure Appl. Math. {\bf 50} (1997), 1287--1293.

\bibitem{EM96}  P.F. Embid and A.J. Majda, {\em Averaging over fast gravity waves for geophysical flows with arbitrary potential vorticity,} Comm. PDE {\bf 21} (1996), 619--658.

\bibitem{GD10} D. G\'erard-Varet and E. Dormy, {\em On the ill-posedness of the Prandtl equation,} J. Amer. Math. Soc. {\bf 23:2} (2010), 591--609.

\bibitem{ILT20} T. Ghoul, S. Ibrahim, Q. Lin and E. S. Titi, {\em
	On the effect of rotation on the life-span of analytic solutions to the $3D$ inviscid primitive equations,} preprint.

\bibitem{GR99} E. Grenier, {\em On the derivation of homogeneous hydrostatic equations,}  M2AN Math. Model. Numer. Anal., {\bf 33:5} (1999), 965--970.


\bibitem{GST15} Y. Guo, K. Simon and E.S. Titi {\em  Global well-posedness of a system of nonlinearly coupled KdV equations of Majda and Biello,}  Commun. Math. Sci. {\bf 13:5} (2015), 1261--1288.


\bibitem{HN16}  D. Han-Kwan and T. Nguyen, {\em  Illposedness of the hydrostatic Euler and
	singular Vlasov equations}, Arch. Ration. Mech. Anal. {\bf 221:3} (2016),  1317--1344.

\bibitem{Hieber-Kashiwabara}
M. Hieber and  T. Kashiwabara, {\em Global well-posedness of the three-dimensional primitive equations in $L^p$-space}, Arch. Rational Mech. Anal., {\bf 221} (2016), 1077--1115.

	
\bibitem{IY} S. Ibrahim and T. Yoneda, {\em
	Long time solvability of the Navier-Stokes-Boussinesq equations with almost periodic initial large data} J. Math. Sci. Univ. Tokyo {\bf 20:1} (2013), 1--25.

\bibitem{K06} G. M. Kobelkov, {\em
	Existence of a solution in the large for the 3D large-scale ocean
	dynamics equaitons,} C. R. Math. Acad. Sci. Paris {\bf 343} (2006), 283--286.
	
\bibitem{KLT14} Y. Koh, S. Lee and R. Takada, {\em Strichartz estimates for the Euler equations in the rotating framework,}
J. Differ. Equ. {\bf 256} (2014), 707--744.

\bibitem{KTZ18}  A. Kostianko, E. S. Titi and S. Zelik, {\em Large dispertion, averaging and attractors: three 1D paradigms,} Nonlinearity {\bf 31} (2018), 317--350.


\bibitem{KMVW14}  I. Kukavica, N. Masmoudi, V. Vicol and T. Wong, {\em
	 On the local well-posedness of the Prandtl
	 and the hydrostatic Euler equations with multiple monotonicity regions,} SIAM J. Math. Anal. {\bf 46:6} (2014), 3865--3890.

\bibitem{KTVZ11} I. Kukavica, R. Temam, V. Vicol, and M. Ziane, {\em
	Local existence and uniqueness for the
	hydrostatic Euler equations on a bounded domain,} J. Differential Equations {\bf 250:3} (2011), 1719--1746.

\bibitem{KZ07} I. Kukavica and M. Ziane, {\em
	The regularity of solutions of the primitive equations of the ocean in space dimension three,} C. R. Math. Acad. Sci. Paris {\bf 345} (2007), 257--260.

\bibitem{KZ072} I. Kukavica and M. Ziane, {\em
	On the regularity of the primitive equations of the ocean,} Nonlinearity {\bf 20} (2007), 2739--2753.

\bibitem{LMY20} W. Li, N. Masmoudi and T. Yang, {\em Well-posedness in Gevrey function space for $3D$ Prandtl equations without structural assumption,} Commun. Pur. Appl. Math. (to appear), arXiv:2001.10222.	

\bibitem{LT18} J. Li and E.S. Titi, {\em
	The primitive equations as the small aspect
ratio limit of the Navier–Stokes equations: rigorous justification of the hydrostatic approximation,}  J. Math. Pures Appl. {\bf 124} (2019), 30--58.

\bibitem{LTW92a} J. L. Lions, R. Temam and S. Wang, {\em
	New formulations of the primitive equations
	of the atmosphere and appliations,} Nonlinearity {\bf 5} (1992), 237--288.

\bibitem{LTW92b} J. L. Lions, R. Temam and S. Wang, {\em
	On the equations of the large-scale ocean,} Nonlinearity {\bf 5} (1992), 1007--1053.

\bibitem{LTW95} J. L. Lions, R. Temam and S. Wang, {\em
	Mathematical study of the coupled models of
	atmosphere and ocean (CAO III),} J. Math. Pures Appl. {\bf 74} (1995), 105--163.

\bibitem{LT04} H. Liu and E. Tamdor, {\em
	: Rotation prevents finite-time breakdown,}  Phys. D {\bf 188} (2004), 262--276.

\bibitem{MW12}  N. Masmoudi and T. Wong, {\em
	On the $H^s$ theory of hydrostatic Euler equations,} Arch. Ration.
Mech. Anal. {\bf 204:1} (2012), 231--271.

\bibitem{O09}  H. Okamoto, {\em Well-posedness of the generalized Proudman-Johnson equation without viscosity,} Journal of Mathematical
Fluid Mechanics {\bf 11} (2009), 46--59.

\bibitem{RE09} M. Renardy,  {\em Ill-posedness of the hydrostatic Euler and Navier-Stokes equations,} Arch. Ration.
Mech. Anal. {\bf 194:3} (2009), 877--886.


\bibitem{W12} T. K. Wong, {\em Blowup of solutions of the hydrostatic Euler equations}, Proc. Amer. Math. Soc. {\bf 143:3} (2015),  1119--1125.
	
\end{thebibliography}
\end{document}